\newcommand{\xdownarrow}[1]{%
	{\left\downarrow\vbox to #1{}\right.\kern-\nulldelimiterspace}
}
\crefname{hypothesis}{Hypothesis}{Hypotheses}
\title{Coupled stochastic systems of Skorokhod type: well-posedness of a mathematical model and its applications\thanks{Submitted to the editors DATE.}}
\author{Thi Kim Thoa Thieu \thanks{MS2Discovery Interdisciplinary Research Institute, Wilfrid Laurier University, 75 University Ave W, Waterloo, Ontario, Canada N2L 3C5.} 
	\and  Adrian Muntean\thanks{Department of Mathematics and Computer Science and Center for Societal Risk Research (CSR), Karlstad University, Karlstad, Sweden.} 
		\and Roderick Melnik \footnotemark[2] \thanks{BCAM - Basque Center for Applied Mathematics, Bilbao, Spain.}}
\begin{document}

\maketitle

\begin{abstract}
	Population dynamics with complex biological interactions,
accounting for uncertainty quantification, is critical for many application areas. However, due to the complexity of biological systems, the mathematical formulation of the corresponding problems faces the challenge that the corresponding stochastic processes should, in
most cases, be considered in bounded domains. 
We propose a model based on a coupled system of reflecting Skorokhod-type stochastic differential equations with  jump-like exit from a boundary. The setting describes the population dynamics of active and passive populations. As main working techniques, we use compactness methods and Skorokhod's representation of solutions to SDEs posed in bounded domains to prove the well-posedness of the system. This functional setting is a new point of view in the field of modelling and simulation of population dynamics. We provide the details of the model, as well as representative numerical examples, and discuss the applications of a Wilson-Cowan-type system, modelling the dynamics of two interacting populations of excitatory and inhibitory neurons. Furthermore, the presence of random input current, reflecting factors together with Poisson jumps, increases firing activity in neuronal systems. 
\end{abstract}

\begin{keywords}
Population dynamics, Stochastic differential equations, Reflecting boundary condition, Skorokhod equations, Well-posedness, Finite activity jumps, Compound Poisson process, Wilson-Cowan equations, Excitatory neurons, Inhibitory neurons
\end{keywords}

\begin{MSCcodes}
60H10, 60H30, 70F99
\end{MSCcodes}

	\section{Introduction}\label{intro}

	In recent years, the modelling of population dynamics arising from biological systems offers many challenging questions to some of the most advanced areas of science and technology. In order to model reliably population dynamics, accounting for the complexity in the interactions among populations, considering for uncertainty quantification is critical for many application areas. One of the main challenges is that the corresponding stochastic processes should, in
most cases, be considered in bounded domains. Before studying such stochastic models in detail, the question of well-posedness has to be addressed. There are several results for stochastic differential equations (SDEs) with reflecting boundary conditions \cite{Tanaka1979,Slominski1994,MarinRubio2004,Wells2006,Sabelfeld2019}, one of them being the seminal contribution of Skorokhod in \cite{Skorohod1961}, where the author provided the existence and uniqueness to one-dimensional stochastic equations for diffusion processes in a bounded region. A direct approach to the solution of the reflecting boundary conditions and reductions to the case of nonsmooth domains are reported in \cite{Lions1984}. Extending results by Tanaka, the author of \cite{Saisho1987}  proved the existence and uniqueness of solutions to the Skorokhod equation posed in a bounded domain in $\mathbb{R}^d$ where a reflecting boundary condition was applied. In \cite{Dupuis1993}, the authors studied the strong existence and uniqueness of the stochastic differential equations with reflecting boundary conditions for domains that might have conners. In addition, the existence, uniqueness and stability of solutions of multidimensional SDE's with reflecting boundary conditions were provided in \cite{Slominski1993}, where the author obtained results on the existence and uniqueness of strong and weak solutions to the SDE for any driving semimartingale and in a more general domain. 
The models of stochastic differential equations in a bounded domain have been 
known for a long time and yet, only a few relevant results are available in the context of population dynamics for the problems posed in confined domains. As far as we are aware, one of the first questions in this setting was posed in the modelling and simulation study \cite{Richardson2019}  while considering the evacuation dynamics of a mixed active-passive pedestrian populations in a complex geometry in the presence of a fire as well as of a slowly spreading smoke curtain. From a stochastic processes perspective, various lattice gas models for active-passive pedestrian dynamics have been recently explored in \cite{Cirillo2019} and \cite{Colangeli2019}. See also \cite{Thieu2019} for a result on the weak solvability of a deterministic system of parabolic partial differential equations describing the interplay of a mixture of flows for active-passive populations of pedestrians. In general, the purely diffusive Brownian motion with random fluctuations of continuous sample paths is used to be assumed as noise in a dynamical system. However, the diffusive fluctuations are large and abrupt events that appear at random times throughout the time series. Therefore, the description of such diffusive fluctuations is incomplete to demonstrate real population dynamics, and the jump-diffusion stochastic processes provide a more accurate descriptions for population dynamics models \cite{Slominski2010,Bressloff2011,Melanson2019}. 

Motivated by \cite{Pilipenko2012,Przybylowicz2021,Situ2005}, we are interested in a  coupled system of reflecting Skorokhod-type stochastic differential equations with jumps, modelling the dynamics of active and passive populations. In this paper, we prove the well-posedness of a coupled system of reflecting Skorokhod-type stochastic differential equations with jump-like exit from a boundary for active-passive population dynamics. From the modelling perspective, our approach is novel, opening new routes for investigation of population dynamics, including the computability of solutions and identification of model parameters. Taking the inspirations from the applications of population dynamics and neuroscience \cite{Wilson1972,Kilpatrick2014}, we provide details of an application of our active-passive population dynamics model in a Wilson-Cowan-type system describing the dynamics of two interacting populations of excitatory and inhibitory neurons.

	\section{Mathematical model: coupled stochastic processes in bounded domain}\label{Model}
We start from considering the dynamics of active-passive population dynamics. Each population is considered in a  one-dimensional domain, then the whole system will be embeded in a two-dimensional domain, which we refer to as $\Omega$. Let $D\subset \mathbb{R}$ satisfies the assumption $(\text{A}_2)$ in Section \ref{Skorohod}. We denote $S=(0,T)$ for some $T \in \mathbb{R}_{+}$. We refer to $\bar{D}\times S$ as $D_T$, note that $\bar{D}$ denotes the closure of $D$. 
\subsection{Active particle population}\label{active}
Our main focus in the remainder of this section is to find an explicit formula for a solution of the reflection problem, which is similar to the Skorokhod-like problem but involves the possibility of a jump-like exit from zero.
For $t \in S$, $f_A: D_T \longrightarrow \mathbb{R},$ and $g_A: D_T \times D_T \longrightarrow \mathbb{R}^{2}$, let $X_{A}$ denote the active population at time $t$. We assume that the dynamics of active population is governed by the following model (see, e.g., \cite{Pilipenko2012,Przybylowicz2021})

\begin{align}\label{Active_population}
	\begin{cases}
		dX_{A}(t) &= f_{A}(X_{A}(t), X_{P}(t))dt + g_{A}(X_{A}(t), X_{P}(t))dW_{A}(t) +d\phi_{A}(t) \\&+ \int_{\mathbb{R}} \rho_A(X_A(s),y)\nu(dy,ds)  \\
		X_{A}(0) &= X_{A,0}, 
	\end{cases}
\end{align}	
where $\rho_A$ is a measurable function such that $\rho_A: \mathbb{R} \to \mathbb{R}$, $W_A = (W_A(t))_{t\in [0,T]}$ is a two-dimensional standard Brownian motion, while $\nu$ is a Poisson random measure with finite jump intensity, associated with a scalar compound Poisson process (clarified below by relationship \eqref{compoundPossion}).
\subsection{Passive particle population}
The case of passive particle populations is treated in a way similar described above.
For $t \in S$, $f_P: D_T \longrightarrow \mathbb{R},$ and $g_P: D_T \times D_T \longrightarrow \mathbb{R}^{2}$, let $X_{P}$ denote the passive population inside the domain $\bar{D}$. The dynamics of the passive population is described  here by a system of stochastic differential equations as follows (see, e.g., \cite{Pilipenko2012,Przybylowicz2021}):
\begin{align}\label{Passive_population}
	\begin{cases}
		dX_{P}(t) &= f_{P}(X_{P}(t), X_{P}(t))dt + g_{P}(X_{A}(t), X_{P}(t))dW_{P}(t) +d\phi_{P}(t)  \\&+ \int_{\mathbb{R}} \rho_P(X_P(s),y)\nu(dy,ds) \\
		X_{P}(0) &= X_{P,0}, 
	\end{cases}
\end{align}
where $\rho_P$ is a measurable function $\rho_P: \mathbb{R} \to \mathbb{R}$, $W_P = (W_P(t))_{t\in [0,T]}$ is a 2-dimensional standard Brownian motion, while $\nu$ is a Poisson random measure with finite jump intensity, associated with a scalar compound Poisson process (clarified below by relationship \eqref{compoundPossion}).

The proposed dynamics \eqref{Active_population}-\eqref{Passive_population} are the general structures of our active-passive population dynamics. We will discuss further detailed model descriptions as well as the applications of the population dynamics in Section \ref{applications}.

	\subsection{The Skorokhod equation}\label{Skorohod}

Now, having representations for active and passive populations, we would like to consider a system of stochastic Skorokhod-type equations and analyze their properties. We consider the following equation (see, e.g., \cite{Pilipenko2012,Przybylowicz2021})
\begin{align}
	dX(t) &= df(X_A(t),X_P(t))dt + dg(X_A(t),X_P(t))dW(t) + d\Phi(t) \nonumber\\&+ \int_{\mathbb{R}} \rho(X(s),y)\nu(dy,ds)
\end{align}

Let $\{W(t), t\geq 0\}$ be a Wiener process and let $\{J(t), t\geq 0\}$ be a nondecreasing L\'{e}vy process independent of $\{W(t), t \geq 0\}$ with finite L\'{e}vy measure $\mu$. The jump measure $\nu$ is a Poisson random measure with finite jump intensity, associated with a compound Poisson process that can be represented by the following form
\begin{align}\label{compoundPossion1}
	J(t) = \sum_{n=1}^{N(t)} \xi_k,
\end{align}
where $N(t)$, $t\geq 0$ is a Poisson process with intensity $\alpha = \mu((0,\infty))$ and $\{\xi_n, n \geq 1\}$ are independent identically distributed random variables independent of $N(t)$, $t\geq 0$ such that $P(\xi_n \leq x) = \mu((0,x])/\alpha$.



\subsubsection{Assumptions}\label{assumption}

We rely on the following assumptions: 

\begin{itemize}
	\item[($\text{A}_1$)] The functions $f: D_T\times D_T \longrightarrow \mathbb{R}\times \mathbb{R},$ and $g: D_T \times D_T \longrightarrow \mathbb{R}^{2} \times \mathbb{R}^{2}$ satisfy the global Lipschitz conditions. 
	\item[($\text{A}_2$)] $\partial D$ is $C^{2,\alpha}$ with $\alpha \in (0,1)$. 
	\item[($\text{A}_3$)] There exists a constant $C_\rho \in (0,\infty)$ such that the jump coefficient $\rho: \mathbb{R}\times \mathbb{R} \to \mathbb{R}$ satisfies the following inequality for all $x \in \mathbb{R}$ (see, e.g., \cite{Przybylowicz2021}):
	
	\begin{align}
		\int_{\mathbb{R}}\|\rho(x,y)\|^2\psi(dy) \leq C_\rho(1 + \|x\|^2)
	\end{align}
	and for all $x,z \in \mathbb{R}$,
	\begin{align}
		\int_{\mathbb{R}}\|\rho(x,y) - \rho(z,y)\|^2\psi(dy) \leq C_\rho\|x - z\|^2,
	\end{align}
	where $\psi$ is the distribution of $\xi$. Moreover, $\rho$ is a bounded function.
\end{itemize}

	It is worth mentioning that assumptions ($\text{A}_1$) and ($\text{A}_2$) correspond to the modeling of the situation in Section \ref{applications}, while ($\text{A}_3$)-($\text{A}_6$) are of technical nature, corresponding to the type of solution we are searching for; clarifications in this direction are given in the next Sections.

Let us denote $\rho(x,y) = y \cdot \rho(x)$. Then, the jump process can be considered as a compound Poisson process, that is for all $t \in [0,T]$ we have  (see, e.g., \cite{Przybylowicz2021}):
\begin{align}\label{compoundPossion}
	\int_{0}^{t} \int_{\mathbb{R}} \rho(X(s),y)\nu(dy,ds) &= \sum_{k=1}^{N_t}\xi_k\cdot \rho(X(\tau_k)) \nonumber\\&= \sum_{0<s\leq t}\rho(X(s)) \Delta J(s)  = \int_0^t \rho(X(s))dJ(s).
\end{align}
\subsubsection{Concept of solution}

Take $x \in \partial D$ arbitrarily fixed. We define the set $\mathcal{N}_x$ of inward normal unit vectors at $x \in \partial D$ by
\begin{align}
	\mathcal{N}_{x}&= \cup_{r > 0}\mathcal{N}_{x,r}, \nonumber\\
	\mathcal{N}_{x,r}&=\left\{\textbf{n} \in \mathbb{R}^2: |\textbf{n}|=1, B(x-r\textbf{n},r) \cap D = \emptyset \right\},
\end{align}
where $B(z,r) = \{y \in \mathbb{R}^2: |y-z| < r\}, z \in \mathbb{R}^2, r>0$. Mind that, in general, it can happen that $\mathcal{N}_x = \emptyset$. In this case, the uniform exterior sphere condition is not satisfied (see, for instance,  the examples provided in Fig. 5 in \cite{Cholaquidis2016} and in page $4$ in \cite{Choulli2016}). 
\newline
We complement our list of  assumptions  ($\text{A}_1$)--($\text{A}_3$) with three specific conditions on the geometry of the domain $D$:

\begin{itemize}
	\item[($\text{A}_4$)] (Uniform exterior sphere condition). There exists a constant $r_0 > 0$ such that
	$$\mathcal{N}_x = \mathcal{N}_{x,r_0} \neq \emptyset \text{ for any } x \in \partial D.$$
	\item[($\text{A}_5$)] There exist constants $\delta > 0$ and $\delta' \in [1, \infty)$ with the following property: for any $x \in \partial D$ there exists a unit vector $\textbf{l}_x$ such that 
	$$\langle\textbf{l}_x,\textbf{n}\rangle \geq 1/\delta' \text{ for any } \textbf{n} \in \bigcup_{y \in B(x, \delta) \cap \partial D} \mathcal{N}_y,$$ 
	where $\langle\cdot, \cdot\rangle$ denotes the usual inner product in $\mathbb{R}^2$.
	\item[($\text{A}_6$)]  There exist $\delta''>0$ and $\nu>0$ such that for each $x_0 \in \partial D$ we can find a function $f \in C^2(\mathbb{R}^2)$ satisfying 
	\begin{align}
		\langle y-x, \textbf{n}\rangle + \frac{1}{\nu}\langle \nabla f(x), \textbf{n}\rangle|y-x|^2 \geq 0,
	\end{align}
	for any $x \in B(x_0, \delta'') \cap \partial D, y \in B(x_0, \delta'') \cap \partial \bar{D}$ and $\textbf{n} \in \mathcal{N}_x$.
\end{itemize}

The following relation is called {\em the Skorokhod equation}: Find $(\xi, \phi) \in C(\bar{S}, \mathbb{R}^2)\times C(\bar{S})$ such that
\begin{align}\label{deterministicS}
	\xi(t)= w(t) + \phi(t),
\end{align}
where $w \in C(\bar{S}, \mathbb{R}^2)$ is given so that $w(0) \in \bar{D}$. 
The solution of \eqref{deterministicS} is a pair $(\xi,\phi)$, which satisfies the following two conditions:
\begin{itemize}
	\item[(a)] $\xi \in C(\bar{S}, \bar{D})$; 
	\item[(b)] $\phi \in C(\bar{S})$ with bounded variation on each finite time interval satisfying $\phi(0)=0$ and 
	\begin{align}
		\phi(t) &= \int_0^t\textbf{n}(y)d|\phi|_y,\nonumber\\
		|\phi|_{t} &= \int_0^t\mathbbm{1}_{\partial D}(\xi(y)) d|\phi|_y,
	\end{align}
	where 
	\begin{align}\label{BV-func}
		\textbf{n}(y) &\in \mathcal{N}_{\xi(y)} \text{ if } \xi(y) \in \partial D, \nonumber\\
		|\phi|_t &= \text{ total variation of } \phi \text{ on } [0,t] \nonumber\\
		&=\sup_{\mathcal{T} \in \mathcal{G}([0,t])} \sum_{k=1}^{n_{\mathcal{T}}} |\phi(t_k) - \phi(t_k-1)|.
	\end{align}
	In \eqref{BV-func}, we denote by $\mathcal{G}([0,t])$ the family of all partitions of the interval $[0,t]$ and take a partition $\mathcal{T} = \{0=t_0<t_1< \ldots <t_{n_\mathcal{T}} =t\} \in \mathcal{G}([0,t])$. The supremum in \eqref{BV-func} is taken over all partitions of type $0=t_0<t_1< \ldots <t_{n_\mathcal{T}} =t$. 
\end{itemize}
Conditions (a) and (b) guarantee that $\xi$ is a \textit{reflecting process} on $\bar{D}$.

It is easily seen from the definition that
$$\xi_1(t) = w_1(t), \ldots, w_{d-1}(t) = \xi_{d-1},$$
and 
$$\xi_d(t)) = w_d(t)+\phi(t), \int_0^t\mathbbm{1}_{\xi_d(y) \notin \partial D}d|\phi|_y.$$
We define a multidimensional Skorokhod's map $\Gamma: C(\bar{S}) \longrightarrow C(\bar{S})$ such that
\begin{align}
	\Gamma w(t) = \Gamma(w_1, \ldots, w_d)(t) = (\Gamma w_1(t), \ldots, \Gamma w_{d-1}(t), \Gamma w_d(t)).
\end{align}
Hence, the pair $(\xi_d, \phi)$ is the exact solution of the one-dimensional Skorokhod problem $\xi_d$. Therefore, it holds
\begin{align}\label{reflection-func}
	\phi(t) = -\min_{y \in [0,t]}\{w_d(y),0\}, \quad \ \xi_d(t) =  w_d(t) - \min_{y \in [0,t]}\{w_d(y),0\} = \Gamma w_d(t).
\end{align}
The multidimensional Skorokhod's map $\Gamma$ satisfies the Lipschitz condition in a space of continuous functions.
\begin{theorem}\label{deterministic-theo}
	Assume conditions ($\text{A}_4$) and ($\text{A}_5$). Then for any $w \in C(\bar{S}, \mathbb{R}^2)$ with $w(0) \in \bar{D}$, there exists a unique solution $\xi(t,w)$ of the equation \eqref{deterministicS} such that $\xi(t,w)$ is continuous in $(t,w)$.
\end{theorem}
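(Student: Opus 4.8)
The plan is to follow the classical treatment of the deterministic Skorokhod problem due to Tanaka and Saisho, combining a sharp geometric estimate extracted from the exterior sphere condition with a compactness argument for existence and a single Gronwall-type estimate for both uniqueness and continuity. The starting point is the geometric inequality encoded in ($\text{A}_4$): since $B(x - r_0\textbf{n}, r_0) \cap D = \emptyset$ for every $x \in \partial D$ and every $\textbf{n} \in \mathcal{N}_x = \mathcal{N}_{x,r_0}$, each $y \in \bar{D}$ satisfies $|y - (x - r_0\textbf{n})|^2 \geq r_0^2$; expanding the square gives
\[
	\langle y - x, \textbf{n}\rangle \geq -\frac{1}{2r_0}\,|y - x|^2
\]
for all $y \in \bar{D}$, $x \in \partial D$ and $\textbf{n} \in \mathcal{N}_x$. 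This inequality, which quantifies how far the inward normals may point away from the chord $y-x$, is the workhorse of the entire argument.

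Next I would establish an a priori bound on the total variation $|\phi|_T$ and then build a solution by approximation. Using the inequality above together with the uniform cone condition ($\text{A}_5$) --- which ensures that near any boundary point all admissible inward normals make an angle bounded away from $\pi/2$ with a fixed direction $\textbf{l}_x$ --- one controls $|\phi|_T$ in terms of the modulus of continuity of $w$ on $[0,T]$ alone. The mechanism is that ($\text{A}_5$) prevents successive reflection pushes from cancelling, so their accumulated length stays comparable to the oscillation of $w$. For existence I would construct approximate pairs $(\xi^n,\phi^n)$ by a time-discretized projection scheme (project $\xi^n(t_k)+w(t_{k+1})-w(t_k)$ back onto $\bar{D}$ and interpolate), or equivalently by the penalization method. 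The uniform estimates make $\{(\xi^n,\phi^n)\}$ equibounded and equicontinuous, so Arzel\`a--Ascoli yields a uniformly convergent subsequence whose limit $(\xi,\phi)$ is checked to satisfy (a) and (b): $\xi$ stays in $\bar{D}$, $\phi$ has bounded variation, $\phi(0)=0$, and $\phi$ increases only while $\xi \in \partial D$ and in an inward normal direction.

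Uniqueness and continuity in $(t,w)$ both follow from one estimate. Given two solutions $(\xi^1,\phi^1)$ and $(\xi^2,\phi^2)$ driven by paths $w^1$ and $w^2$, write $\xi^1 - \xi^2 = (w^1 - w^2) + (\phi^1 - \phi^2)$ and expand $|\xi^1(t)-\xi^2(t)|^2$ in the Stieltjes sense. Applying the geometric inequality at every instant where $\xi^i \in \partial D$ converts the boundary contributions into terms dominated by $r_0^{-1}|\xi^1-\xi^2|^2\,d(|\phi^1|+|\phi^2|)$, which leads to
\[
	|\xi^1(t)-\xi^2(t)|^2 \leq C\,\|w^1-w^2\|_\infty^2 + C\int_0^t |\xi^1-\xi^2|^2\,d\big(|\phi^1|+|\phi^2|\big).
\]
Since the total variations are finite by the a priori bound, Gronwall's lemma gives $\|\xi^1-\xi^2\|_\infty \leq C'\|w^1-w^2\|_\infty$, with $C'$ depending only on the a priori total-variation bounds. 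Setting $w^1=w^2$ proves uniqueness, while the same estimate, together with the $t$-continuity built into condition (a), yields joint continuity of $\xi(t,w)$ in $(t,w)$.

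I expect the a priori bound on $|\phi|_T$ to be the main obstacle. In a genuinely curved domain the inward normal rotates along $\partial D$, and the curvature error term $\tfrac{1}{2r_0}|y-x|^2$ in the geometric inequality threatens to overwhelm the leading linear term in any naive computation; condition ($\text{A}_5$) is precisely what allows these quadratic errors to be absorbed, and making that absorption rigorous --- by localizing carefully in both space and time so that $\xi$ is confined to a $\delta$-neighborhood of a single boundary point on each piece --- is the technical heart of the proof.
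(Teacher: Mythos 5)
The paper does not actually prove this theorem: immediately after the statement it refers the reader to Theorem 4.1 of Saisho (1987). Your proposal is, in effect, a reconstruction of the argument in that cited reference --- the geometric inequality $\langle y-x,\textbf{n}\rangle \geq -|y-x|^2/(2r_0)$ extracted from ($\text{A}_4$), an a priori total-variation bound for $\phi$ obtained from the uniform direction condition ($\text{A}_5$), existence by discretization/penalization plus Arzel\`a--Ascoli, and a Gronwall-type estimate for uniqueness and continuity. So you are taking the same route that the paper delegates to the literature, and your identification of the total-variation bound as the technical heart is accurate.

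One step as written would fail, however. In the uniqueness/continuity estimate you propose to ``expand $|\xi^1(t)-\xi^2(t)|^2$ in the Stieltjes sense,'' but $\xi^1-\xi^2=(w^1-w^2)+(\phi^1-\phi^2)$ and $w^1-w^2$ is merely continuous, not of bounded variation, so $d(w^1-w^2)$ is not a measure and no such expansion exists. The standard repair (Tanaka's trick, used by Saisho) is to write, with $v=w^1-w^2$ and $\eta=\phi^1-\phi^2$,
\begin{align*}
|\xi^1(t)-\xi^2(t)|^2 \;=\; |v(t)|^2 \;+\; 2\int_0^t \langle v(t)-v(s),\, d\eta(s)\rangle \;+\; 2\int_0^t \langle \xi^1(s)-\xi^2(s),\, d\eta(s)\rangle,
\end{align*}
where only the last integral is controlled by your geometric inequality. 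The middle term is bounded by $4\|v\|_\infty\big(|\phi^1|_t+|\phi^2|_t\big)$, i.e.\ it is \emph{linear}, not quadratic, in $\|v\|_\infty$. Gronwall then yields $\|\xi^1-\xi^2\|_\infty^2 \leq C\big(\|v\|_\infty^2+\|v\|_\infty\big)$ with $C$ depending on the total-variation bounds: a square-root-type modulus of continuity for the solution map, not the Lipschitz bound $\|\xi^1-\xi^2\|_\infty\leq C'\|v\|_\infty$ you claim (Lipschitz continuity of the Skorokhod map is special to half-space-like geometries and is not what the exterior-sphere and cone conditions deliver; the modulus Saisho proves is also of this weaker type). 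This does not damage the theorem --- for uniqueness one takes $v=0$, where the defective term vanishes, and joint continuity in $(t,w)$ needs only the H\"older-type bound --- but the displayed inequality in your proposal and the Lipschitz conclusion drawn from it should be corrected accordingly.
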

For the proof of this Theorem, we refer the reader to Theorem $4.1$ in \cite{Saisho1987}.

To come closer to the model equations for active-passive population dynamics described above in Sections \ref{active}-\ref{Skorohod},  we introduce the mappings
$$f: D_T \times D_T \longrightarrow \mathbb{R}^2 , \quad g: D_T \times D_T \longrightarrow \mathbb{R}^{2\times 2} $$
and consider the Skorokhod-like system on the probability space $(\Omega, \mathcal{F}, P)$
\begin{align}\label{main-skorohod}
	dX(t) = f(X(t))dt +g(X(t))dW(t) + \int_{\mathbb{R}} \rho(X(s),y)\nu(dy,ds) + d\Phi(t)
\end{align}
with
\begin{align}\label{initial-cond}
	X(0)=X_0 \in \bar{D},
\end{align}
where the inital value $X_0$ is assumed to be an $\mathcal{F}_0-$measurable random variable and $B(t)$ is a $2\times2-$dimensional $\mathcal{F}_t-$Brownian motion with $B(0)=0$. Here, $\{\mathcal{F}_t\}$ is a filtration such that $\mathcal{F}_0$ contains all $P-$negligible sets and $\mathcal{F}_t= \cap_{\varepsilon>0}\mathcal{F}_{t+\varepsilon}$, while $\Phi_t(t)$ is defined in the Definition \ref{def} below. Further properties of the structure of \eqref{main-skorohod}-\eqref{initial-cond}  are listed in Section \ref{wellpossedness}. Similarly to the deterministic case, we can now define the following concept of solutions to \eqref{main-skorohod}.
\begin{definition}\label{def}
	A pair $(X(t),\Phi(t))$ is called solution to \eqref{main-skorohod}--\eqref{initial-cond} if the following conditions hold:
	
	\begin{itemize}
		\item[(i)] $X(t)$ is a $\bar{D}-$valued $\mathcal{F}_t-$adapted continuous process;
		\item[(ii)] $\Phi(t)$ is an $\mathbb{R}^2-$valued $\mathcal{F}_t-$adapted continuous process with bounded variation on each finite time interval such that $\Phi(0)=0$ with
		\begin{align}
			\Phi(t) &= \int_0^t\textbf{n}(y)d|\Phi|_y,\nonumber\\
			|\Phi|(t) &= \int_0^t\mathbbm{1}_{\partial D}(X(y)) d|\Phi|_y.
		\end{align} 
		\item[(iii)] $\textbf{n}(s) \in \mathcal{N}_{X(s)} \in \partial D$.
	\end{itemize}
\end{definition}
Note that the Definition \ref{def} ensures that $X(t)$ entering \eqref{main-skorohod}-\eqref{initial-cond} is a reflecting process on $\bar{D}$.

	\section{Well-posedness of Skorokhod-type system}\label{wellpossedness}

In this section, we establish the well-posedness of the Skorokhod-type system by showing the existence, uniqueness and stability of solutions to the problem \eqref{main-skorohod}--\eqref{initial-cond} in the sense of Definition \ref{def}.

We use the compactness method together with the continuity result of the deterministic case stated in Theorem \ref{deterministic-theo} for proving the existence of solutions to \eqref{main-skorohod}-\eqref{initial-cond}. We follow the arguments  by G. Da Prato and J. Zabczyk  ($2014$) (cf. \cite{Prato14}, Section $8.3$) and a result of F. Flandoli (1995) (cf. \cite{Flandoli95}) for martingale solutions. The starting point of this argument is based on considering a sequence $\{X_t^n\}$ of solutions of the following system of Skorokhod-type stochastic differential equations
\begin{align}\label{main-skorohod2}
	\begin{cases}
		dX^n(t) &= f(X^n(h^n(t)))dt + g(X^n(h^n(t)))dW(t) + \int_{\mathbb{R}} \rho(X^n(s),y)\nu(dy,ds) \nonumber\\&+ d\Phi^n(t),\\
		X^n(0) &= X_0 \in \bar{D},
	\end{cases}
\end{align}
where $X_0^n \in \bar{D}$ is given.

For convenience, we recast the solution to the system \eqref{Active_population} and \eqref{Passive_population} in terms of the vector $X_t^n(t)$, $n\in \mathbb{N}$, such that
\begin{align*}
	X^n(t) &:= (X_A^n(t), X_P^n(t)),
	f(X^n(t)) := (f_A(X_A^n(t), X_P^n(t)),f_P(X_A^n(t), X_P^n(t))),\\
	g(X^n(t)) &:= (g_A(X_A^n(t), X_P^n(t)),g_P(X_A^n(t), X_P^n(t))),
	\Phi^n(t) := (\phi_{A}^n(t),\phi_{P}^n(t)),\\
	J(t) &:= (J_A(t), J_P(t)),
	X^n(0) := (X_{A,0},X_{P,0}),
	X(t) := (X_A(t), X_P(t)),\\
	f(X(t)) &:= (f_A(X_A(t), X_P(t)),f_P(X_A(t), X_P(t))),\\
	g(X(t)) &:= (g_A(X_A(t), X_P(t)),g_P(X_A(t), X_P(t))),\\
	\Phi(t) &:= (\phi_{A}(t),\phi_{P}(t)),
	J(t) := (J_A(t), J_P(t)), W(t):= (W_A(t), W_P(t)).
\end{align*}

Let us introduce the following step functions:
\begin{align}\label{step-func1}
	h^n(0)=0,
\end{align}
\begin{align}
	h^n(t)= (k-1)2^{-n},
\end{align}
\begin{align}\label{step-func2}
	(k-1)2^{-n} <t\leq  k 2^{-n}, \quad k=1,2,\ldots,n \text{ and } n\geq 1.
\end{align}
Using Theorem \ref{deterministic-theo}, we have a unique solution of \eqref{main-skorohod2}. Furthermore, each value of $X_t^n(t)$ is obtained within $0 \leq t \leq k2^{-n}$ and $X_t^n(t)$ is attained for $k2^{-n} < t \leq (k+1)2^{-n}$ that is uniquely determined as the solution of the following Skorokhod equation: 
\begin{align}
	X^n(t) &= X^n(k2^{-n}) + f(X^n(k2^{-n}))(t-k2^{-n}) + g(X^n(k2^{-n}))\{W(t) - W(k2^{-n})\} \nonumber\\&+ \int_{\mathbb{R}} \rho(X^n(k2^{-n}),y)\nu(dy,ds) + \Phi^n(t).
\end{align}
Let us denote
\begin{align}
	Y^n(t) &:= X_0 + \int_{0}^tf(X^n(h^n(y)))dy + \int_0^t g(X^n(h^n(y)))dW(y) \nonumber\\&+ \int_{0}^{t}\int_{\mathbb{R}} \rho(X^n(k2^{-n}),y)\nu(ds,dy). 
\end{align}
Then $X^n(t) = (\Gamma Y_t^n)(t)$, we also have
\begin{align}\label{subsystem}
	Y^n(t) &:= X_0 + \int_{0}^tf((\Gamma Y^n)(h^n(y)))dy \nonumber\\&+ \int_0^t g((\Gamma Y^n)(h^n(y)))dW(y) + \int_{0}^{t}\int_{\mathbb{R}} \rho(X^n(h^n(s)),y)\nu(ds,dy). 
\end{align}
We define the family of laws
\begin{eqnarray}\label{laws}
	\left\{\mathcal{P}(Y^n); 0\leq t \leq T, n\geq 1\right\}.
\end{eqnarray}
Accordingly, \eqref{laws} is a family of probability distributions of $Y^n$. Let $\mathcal{P}^n$ be the laws of $Y^n$.
\subsection{Statement of the main theoretical results of the paper}

The main theoretical results of this paper are stated in Theorems \ref{main-theo1}-\ref{main-stabi} below. In this section, the focus lies on ensuring the well-posedness of Skorokhod solutions with jump-like exit from a boundary to our population dynamics problem.

\begin{theorem}[Existence]\label{main-theo1}
	Assume that $(\text{A}_1)$-$(\text{A}_3)$ hold. There exists at least a weak solution to the Skorokhod-type system \eqref{main-skorohod}--\eqref{initial-cond} in the sense of Definition \ref{def}.
\end{theorem}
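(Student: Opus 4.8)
The plan is to realize the weak solution as a limit point of the Euler--Skorokhod approximations $Y^n$ introduced in \eqref{subsystem}, following the compactness scheme of Da Prato--Zabczyk and Flandoli cited above; throughout I treat the geometric conditions $(\text{A}_4)$--$(\text{A}_6)$ on $D$ as standing structural hypotheses, since they are needed to invoke Theorem \ref{deterministic-theo}. The argument splits into four stages: (i) uniform moment estimates; (ii) tightness of the laws $\{\mathcal{P}^n\}$ from \eqref{laws}; (iii) extraction of a weakly convergent subsequence and passage to an almost surely convergent realization via Skorokhod's representation theorem; and (iv) identification of the limit as a solution in the sense of Definition \ref{def}.

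First I would establish uniform-in-$n$ a priori bounds. Since $X^n(t)=(\Gamma Y^n)(t)$ takes values in the bounded set $\bar D$, the state is trivially bounded, so the substance of the estimate is to control $\sup_{t\le T}\mathbb{E}|Y^n(t)|^2$ together with the total variation $\mathbb{E}|\Phi^n|_T$ of the reflection term. Using the global Lipschitz (hence linear-growth) bounds from $(\text{A}_1)$ on $f$ and $g$, the growth and boundedness hypotheses on $\rho$ in $(\text{A}_3)$, the Burkholder--Davis--Gundy inequality for the $dW$-integral, and the finite intensity $\alpha=\mu((0,\infty))$ of the compound Poisson process \eqref{compoundPossion}, a Gronwall argument applied to \eqref{subsystem} yields $\sup_n\mathbb{E}\sup_{t\le T}|Y^n(t)|^2<\infty$. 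The Lipschitz continuity of the Skorokhod map $\Gamma$ then transfers these bounds to $X^n$, and via the decomposition $\Phi^n=X^n-Y^n$ bounds $|\Phi^n|_T$ uniformly in $n$.

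Next I would prove tightness of $\{\mathcal{P}^n\}$ on the Skorokhod space $D([0,T];\bar D)$ of c\`adl\`ag paths, decomposing $Y^n$ into its continuous drift-plus-diffusion part and the finite-activity jump part. For the continuous part, moment estimates on increments over stopping times of the form $\mathbb{E}\big[\,|Y^n(\tau+\theta)-Y^n(\tau)|^2\wedge 1\,\big]\le C\theta$ follow from the a priori bounds and the BDG inequality; for the jump part the finiteness of $\mu$ controls the number and size of jumps on $[0,T]$. Combining these via Aldous' tightness criterion gives tightness of the laws of $Y^n$, and since $\Gamma$ is continuous by Theorem \ref{deterministic-theo}, the continuous-mapping theorem yields tightness of the laws of the pairs $(X^n,\Phi^n)=(\Gamma Y^n,\Phi^n)$. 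By Prokhorov's theorem a subsequence $\mathcal{P}^{n_k}$ converges weakly to some law $\mathcal{P}$, and Skorokhod's representation theorem provides processes $(\tilde Y^{n_k},\tilde W^{n_k},\tilde\nu^{n_k})$ on a common probability space converging almost surely to a limit $(\tilde Y,\tilde W,\tilde\nu)$.

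Finally I would pass to the limit in \eqref{subsystem} along this realization and verify Definition \ref{def}. The Lipschitz bounds in $(\text{A}_1)$ and $(\text{A}_3)$, together with the almost sure convergence $\tilde X^{n_k}=\Gamma\tilde Y^{n_k}\to\Gamma\tilde Y=:\tilde X$ guaranteed by the continuity of $\Gamma$, let me identify the drift, diffusion and jump integrals in the limit, so that $\tilde X$ solves \eqref{main-skorohod}--\eqref{initial-cond} with a limiting reflection term $\tilde\Phi$. Conditions (i)--(ii) of Definition \ref{def} follow from closedness of $\bar D$ under limits, the uniform total-variation bound, and lower semicontinuity of total variation. \textbf{The main obstacle} I anticipate is condition (iii), namely that the limiting reflection direction satisfies $\textbf{n}(s)\in\mathcal{N}_{\tilde X(s)}$ and is supported on $\{\tilde X(s)\in\partial D\}$: neither the indicator $\mathbbm{1}_{\partial D}$ nor the normal-cone map $x\mapsto\mathcal{N}_x$ is continuous, so the reflection constraint is not preserved automatically under weak limits. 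Here the geometric assumptions $(\text{A}_4)$--$(\text{A}_6)$ are essential, since the uniform exterior sphere condition and the directions $\textbf{l}_x$ guarantee that the graph of $x\mapsto\mathcal{N}_x$ is closed and that each $|\Phi^n|$ charges only neighborhoods of $\partial D$; passing to the weak limit of the measures $d|\Phi^n|$ and combining with the uniform bounds then recovers the admissible reflection structure for $\tilde X$.
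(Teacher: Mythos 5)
Your proposal follows the same compactness skeleton as the paper: the time-discretized approximations $Y^n$ from \eqref{subsystem}, tightness of the laws \eqref{laws}, Prokhorov's theorem, Skorokhod's representation theorem, and the continuity of the deterministic Skorokhod map (Theorem \ref{deterministic-theo}) to pass to the limit; your standing use of $(\text{A}_4)$--$(\text{A}_6)$ also matches what the paper's proof implicitly requires, even though the theorem statement lists only $(\text{A}_1)$--$(\text{A}_3)$. The implementation differs in two places. For tightness, the paper stays in $C(\bar{S};\mathbb{R}^2\times\mathbb{R}^2)$ and uses Markov plus Burkholder--Davis--Gundy estimates to place the $Y^n$ in the relatively compact sets \eqref{km1m2} (sup-norm plus $W^{\alpha,p}$ seminorm with $\alpha p>1$), whereas you work in the c\`adl\`ag space $D([0,T])$ with Aldous' criterion, splitting off the finite-activity jump part; your choice is in fact better adapted to the jump structure, since a path with a compound-Poisson jump cannot have finite $W^{\alpha,p}$ seminorm for $\alpha p>1$, a point the paper glosses over. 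For the identification step, the paper obtains conditions (i)--(iii) of Definition \ref{def} essentially for free, because the limit pair $(\tilde{X},\tilde{\Phi})$ is by construction the solution of the deterministic Skorokhod problem with input $\tilde{Y}$, so Lemma \ref{solu-theo} only needs to identify $\tilde{Y}$ via Riemann--Stieltjes approximation of the stochastic integrals; your direct verification of (iii) through closedness of the graph of $x\mapsto\mathcal{N}_x$ and weak convergence of the measures $d|\Phi^n|$ is a heavier but legitimate alternative (it is the type of argument used to prove Theorem \ref{deterministic-theo} itself), and under $(\text{A}_4)$--$(\text{A}_6)$ it can be made to work.

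The one genuine gap is your use of Theorem \ref{deterministic-theo} on c\`adl\`ag inputs. As stated (and as proved by Saisho), that theorem concerns $w\in C(\bar{S};\mathbb{R}^2)$ and continuity of the solution map in the uniform topology; once you place $Y^n$ in $D([0,T])$ with the $J_1$ topology, neither the definition of $\Gamma Y^n$ nor the continuous-mapping step used for tightness of $(X^n,\Phi^n)$ is covered by it. You need either an extension of the Skorokhod problem and of the continuity of its solution map to c\`adl\`ag paths (as in S{\l}omi\'nski's work, cited in the paper's introduction but not developed there), or an interlacing argument: because the Poisson random measure has finite intensity, solve the continuous reflected equation between consecutive jump times and restart at each jump, which reduces every step to the continuous-input case where Theorem \ref{deterministic-theo} applies. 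Either patch is standard, but without one of them the passage from tightness of $Y^n$ to tightness of $(X^n,\Phi^n)$, and the identification $\tilde{X}=\Gamma\tilde{Y}$, do not literally go through. (For fairness, the paper has the mirror-image defect---it keeps the jump terms while claiming compactness in spaces of continuous functions---and your c\`adl\`ag framework is the one in which that defect can actually be repaired.)
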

\begin{theorem}[Uniqueness]\label{main-theo2} Assume that $(\text{A}_1)$-$(\text{A}_6)$ hold.
	There is a unique strong solution to \eqref{main-skorohod}--\eqref{initial-cond}.
\end{theorem}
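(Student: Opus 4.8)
The plan is to obtain the unique strong solution by combining the weak existence already granted by Theorem \ref{main-theo1} with a pathwise uniqueness estimate, and then invoking the Yamada--Watanabe principle in its jump-diffusion form: weak existence together with pathwise uniqueness yields existence and uniqueness of a strong solution. Thus the whole argument reduces to showing that any two solutions $(X,\Phi)$ and $(\tilde X,\tilde\Phi)$ of \eqref{main-skorohod}--\eqref{initial-cond}, defined on the same stochastic basis and driven by the same Brownian motion $W$, the same Poisson random measure $\nu$, and starting from the same $X_0$, coincide almost surely.

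First I would write the equation for the difference $X-\tilde X$ and apply the It\^o formula for semimartingales with jumps to the weighted functional $e^{-C A_t}\,|X(t)-\tilde X(t)|^2$, where $A_t=|\Phi|_t+|\tilde\Phi|_t$ is the sum of the total-variation processes of the two reflection terms and $C>0$ is a constant to be fixed. Splitting the resulting expression into drift, diffusion, jump, and reflection contributions, the first three are standard: the Lipschitz assumption $(\text{A}_1)$ controls the drift and diffusion coefficients, the stochastic integral against $dW$ is a martingale that vanishes under expectation, and assumption $(\text{A}_3)$ bounds the jump part through $\int_{\mathbb{R}}\|\rho(x,y)-\rho(z,y)\|^2\psi(dy)\le C_\rho\|x-z\|^2$ after passing to the compensated measure. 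Each of these terms is therefore dominated by $C\,\mathbb{E}\!\int_0^t e^{-CA_s}|X(s)-\tilde X(s)|^2\,ds$.

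The hard part will be the reflection contribution $2\int_0^t e^{-CA_s}\langle X(s)-\tilde X(s),\,d\Phi(s)-d\tilde\Phi(s)\rangle$, which is integrated against the singular measures $d|\Phi|_s$ and $d|\tilde\Phi|_s$ rather than against Lebesgue measure. Here the geometric conditions $(\text{A}_4)$--$(\text{A}_6)$ enter decisively. The uniform exterior sphere condition $(\text{A}_4)$ gives, for $X(s)\in\partial D$ and $\textbf{n}\in\mathcal{N}_{X(s)}$, the one-sided estimate $\langle X(s)-\tilde X(s),\textbf{n}\rangle\le \tfrac{1}{2r_0}|X(s)-\tilde X(s)|^2$, and symmetrically with the roles of $X$ and $\tilde X$ exchanged, so that the reflection term is bounded by $\tfrac{1}{r_0}\int_0^t e^{-CA_s}|X(s)-\tilde X(s)|^2\,dA_s$. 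Choosing $C$ suitably large, this singular integral is exactly cancelled by the contribution produced when the It\^o formula differentiates the weight $e^{-CA_s}$; conditions $(\text{A}_5)$ and $(\text{A}_6)$ guarantee the a priori finiteness and integrability of $A_t$ that make this cancellation legitimate. After the cancellation only Lebesgue-type integrals remain, and applying Gronwall's inequality to $t\mapsto\mathbb{E}\big[e^{-CA_t}|X(t)-\tilde X(t)|^2\big]$ forces it to vanish identically; since $A_t<\infty$ almost surely, pathwise uniqueness follows, and with it the strong well-posedness asserted in the theorem.
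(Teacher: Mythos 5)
Your proposal is correct and shares the paper's overall architecture --- pathwise uniqueness via an exponentially weighted version of $|X(t)-\tilde X(t)|^2$, a Gronwall argument, and then the Yamada--Watanabe principle to upgrade weak existence plus pathwise uniqueness to a unique strong solution --- but the mechanism by which you kill the singular reflection terms is a genuine variant. The paper follows Saisho (Lemma $5.3$) and Lions--Sznitman (Proposition $4.1$): it localizes near a boundary point $x_0$, takes the test function $f(x)=\langle \textbf{l},x-x_0\rangle$ built from the vector $\textbf{l}$ of $(\text{A}_5)$, and uses the admissibility condition $(\text{A}_6)$ to obtain the one-sided inequalities \eqref{pre-1}--\eqref{pre-2}, with an exponential weight built from the reflection processes along $\textbf{l}$. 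You instead derive the one-sided bound $\langle X(s)-\tilde X(s),\textbf{n}\rangle\le\tfrac{1}{2r_0}|X(s)-\tilde X(s)|^2$ directly from the uniform exterior sphere condition $(\text{A}_4)$ (expanding $|y-(x-r_0\textbf{n})|\ge r_0$) and damp with $e^{-CA_t}$, $A_t=|\Phi|_t+|\tilde\Phi|_t$; this is more self-contained, needs no localization, and makes transparent that the key estimate costs only $(\text{A}_4)$. Your treatment of the jump part (compensation plus the Lipschitz-type bound in $(\text{A}_3)$) is also more careful than the paper's, whose displayed estimate \eqref{est-uni1} in fact carries no jump terms at all. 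One attribution slip on your side: $(\text{A}_5)$--$(\text{A}_6)$ do not ``guarantee the a priori finiteness of $A_t$'' --- bounded variation of $\Phi,\tilde\Phi$ on finite intervals is part of Definition \ref{def}; in this framework those assumptions really enter through the deterministic Skorokhod theory (Theorem \ref{deterministic-theo}) that underlies the existence construction, and in the paper's own uniqueness argument. This does not affect the validity of your estimate, since nothing in your cancellation step actually uses $(\text{A}_5)$--$(\text{A}_6)$.
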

\begin{theorem}[Dependence on parameters]\label{main-stabi}
	Assume that $(\text{A}_1)$-$(\text{A}_3)$ hold and 
	\begin{align}
		\lim_{k \to \infty} E(|X_0^k - X_0|^2) = 0.
	\end{align}
	Suppose that $X_t^n(t) \in C(\bar{S}; \bar{D}\times \bar{D})$ solves
	\begin{align}
		\begin{cases}
			dX^n(t) = f(X^n(t))dt + g(X^n(t))dW(t) + \int_{\mathbb{R}} \rho(X^n(s),y)\nu(ds,dy) +  d\Phi^n(t) ,\\
			X^n(0) = X_0^n \in \bar{D},
		\end{cases}
	\end{align}
	where $X_0^n \in \bar{D}$ is given and $\Phi^n(t)$ is defined as a sequence of $\Phi(t)$ in Definition \ref{def}.
	Then
	\begin{align}
		\lim_{n \to \infty} E(\max_{0 \leq t \leq T}|X^n(t) - X(t)|^2) = 0,
	\end{align}
	where $X_t \in C(\bar{S}; \bar{D}\times \bar{D})$ is the unique solution of the following problem:
	\begin{align}
		\begin{cases}
			dX(t) = f(X(t))dt + g(X(t))dW(t) +\int_{\mathbb{R}} \rho(X(s),y)\nu(ds,dy) + d\Phi(t),\\
			X(0) = X_0 \in \bar{D}.
		\end{cases}
	\end{align}
\end{theorem}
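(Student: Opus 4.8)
The plan is to reduce the stability estimate to a Gronwall argument built on the Lipschitz continuity of the Skorokhod map $\Gamma$ together with the Lipschitz hypotheses $(\text{A}_1)$ and $(\text{A}_3)$. First I would represent both processes as images under $\Gamma$ of their associated pre-reflection driving processes. Setting
\begin{align}
	Y^n(t) = X_0^n + \int_0^t f(X^n(y))\,dy + \int_0^t g(X^n(y))\,dW(y) + \int_0^t\int_{\mathbb{R}}\rho(X^n(s),y)\nu(dy,ds),
\end{align}
and defining $Y(t)$ analogously with $X_0^n$ replaced by $X_0$ and $X^n$ by $X$, one has $X^n = \Gamma Y^n$ and $X = \Gamma Y$. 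Since $\Gamma$ satisfies the Lipschitz condition on $C(\bar{S})$ with some constant $L>0$, it follows that
\begin{align}
	\sup_{0\leq s\leq t}|X^n(s) - X(s)| \leq L\sup_{0\leq s\leq t}|Y^n(s) - Y(s)|,
\end{align}
so it suffices to control the right-hand side in the mean-square-sup norm.

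Next, I would decompose $Y^n - Y$ into its initial-data, drift, diffusion and jump contributions and estimate each term separately. Writing $\varphi(t) := E\big(\sup_{0\leq s\leq t}|X^n(s)-X(s)|^2\big)$ and using the elementary bound $(a+b+c+d)^2 \leq 4(a^2+b^2+c^2+d^2)$, the initial-data term contributes $4L^2 E|X_0^n - X_0|^2$. For the drift term, the Cauchy--Schwarz inequality in time combined with the global Lipschitz bound on $f$ from $(\text{A}_1)$ yields a contribution bounded by $C\int_0^t \varphi(s)\,ds$. For the diffusion term, the Burkholder--Davis--Gundy inequality followed by the Lipschitz bound on $g$ gives a bound of the same form $C\int_0^t \varphi(s)\,ds$. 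For the jump term, I would invoke the second-moment estimate for stochastic integrals against the finite-activity Poisson measure $\nu$ (equivalently, the compound-Poisson representation \eqref{compoundPossion}) together with the Lipschitz condition on $\rho$ in $(\text{A}_3)$, namely $\int_{\mathbb{R}}\|\rho(x,y)-\rho(z,y)\|^2\psi(dy)\leq C_\rho\|x-z\|^2$, to again obtain a contribution of the form $C\int_0^t\varphi(s)\,ds$.

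Combining these estimates produces the integral inequality
\begin{align}
	\varphi(t) \leq C_1\, E|X_0^n - X_0|^2 + C_2\int_0^t \varphi(s)\,ds,
\end{align}
with constants $C_1, C_2$ depending only on $T$, $L$ and the Lipschitz constants of $f$, $g$ and $\rho$. Since the solutions have finite second moments, $\varphi$ is finite on $[0,T]$, and Gronwall's lemma yields $\varphi(T) \leq C_1\, E|X_0^n - X_0|^2 \exp(C_2 T)$. The hypothesis $\lim_{n\to\infty}E|X_0^n - X_0|^2 = 0$ then forces $\varphi(T)\to 0$, which is precisely the claimed convergence.

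I expect the main obstacle to be the interplay between the jump term and the self-referential structure of the estimate. The difficulty is twofold: first, one must establish a BDG-type second-moment bound for the compound-Poisson stochastic integral and verify that $(\text{A}_3)$ delivers the required Lipschitz control after integrating against $\psi$; second, because $Y^n - Y$ contains $X^n - X$ inside all three integrals, the bound is circular and only closes through Gronwall's lemma, which in turn requires the sup-norm Lipschitz estimate for $\Gamma$ to be compatible with taking expectations of suprema. A secondary technical point is confirming the a priori finiteness of $\varphi$ on $[0,T]$ before Gronwall's lemma may legitimately be applied; this follows from the moment bounds already used in establishing existence in Theorem \ref{main-theo1}.
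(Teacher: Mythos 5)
Your proposal is correct and rests on the same pillars as the paper's proof --- Lipschitz control of the reflection, the Lipschitz hypotheses $(\text{A}_1)$ and $(\text{A}_3)$, and Gronwall's lemma --- but it is organized differently, and the difference is worth noting. The paper never applies the Skorokhod map $\Gamma$ to $X^n - X$ as a whole; instead it keeps $\Phi^n(t)-\Phi(t)$ as an explicit fifth term in the decomposition of $X^n(t)-X(t)$, bounds that term through the explicit one-dimensional reflection formula \eqref{reflection-func} (which is exactly the Lipschitz property of $\Gamma$, with constant $2$), closes a pointwise-in-time Gronwall inequality for $E(|X^n(t)-X(t)|^2)$, and only afterwards upgrades to $E(\max_{0\le t\le T}|X^n(t)-X(t)|^2)$ by a second pass in which the stochastic-integral term is controlled by the martingale (Doob) inequality. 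You instead work with $\varphi(t)=E\bigl(\sup_{0\le s\le t}|X^n(s)-X(s)|^2\bigr)$ from the outset, absorbing the reflection into the non-anticipating sup-norm Lipschitz estimate for $\Gamma$ and controlling the diffusion term by the Burkholder--Davis--Gundy inequality; this collapses the paper's two passes into one and delivers the maximal estimate directly. The trade-off is that your route requires a BDG-type second-moment bound for the uncompensated compound-Poisson integral --- which you correctly flag, and which follows by splitting off the compensator and applying $(\text{A}_3)$ together with the finiteness of the jump intensity --- whereas the paper's route uses only the explicit reflection formula, Cauchy--Schwarz, and Doob's inequality, at the cost of the longer two-stage structure; your version is the cleaner argument, the paper's the more elementary one.
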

These statements are proven in the next subsections \ref{existence-proof}-\ref{stability-proof}.

\subsubsection{Proof of the existence}\label{existence-proof}
Let us start with handling the tightness of the laws $\{\mathcal{P}^n\}$ through the following Lemma.
\begin{lemma}\label{tightness}
	Assume that $(\text{A}_1)$-$(\text{A}_3)$ hold. Then, the family $\{\mathcal{P}^n\}$ given by \eqref{laws} is tight in $C(\bar{S}, \mathbb{R}^2\times \mathbb{R}^2 )$.
\end{lemma}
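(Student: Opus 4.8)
The plan is to prove tightness through the classical two-ingredient criterion for a sequence of laws on a path space: uniform tightness of the time marginals together with a uniform modulus-of-continuity (equivalently, an Aldous-type) estimate. The decisive structural simplification is that each $X^n = \Gamma Y^n$ takes values in the \emph{bounded} closed domain $\bar{D}$, so that along the trajectories the arguments fed into $f$, $g$ and $\rho$ range over a compact set. Combined with the global Lipschitz hypothesis $(\text{A}_1)$ and the growth/boundedness hypothesis $(\text{A}_3)$, this renders the drift coefficient $f$, the diffusion coefficient $g$, and the jump kernel $\rho$ uniformly bounded along the trajectories, uniformly in $n$. I would therefore decompose
\begin{align*}
	Y^n(t) = X_0 + A^n(t) + M^n(t) + K^n(t),
\end{align*}
where $A^n(t) = \int_0^t f((\Gamma Y^n)(h^n(y)))\,dy$ is the drift, $M^n(t) = \int_0^t g((\Gamma Y^n)(h^n(y)))\,dW(y)$ is the Brownian stochastic integral, and $K^n(t) = \int_0^t\int_{\mathbb{R}} \rho(X^n(h^n(s)),y)\,\nu(ds,dy)$ is the finite-activity jump part, and treat the three summands separately before recombining.

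First I would establish the uniform second-moment bound $\sup_n E[\sup_{0\le t\le T}|Y^n(t)|^2]<\infty$. For $A^n$ this is immediate from the boundedness of $f$; for $M^n$ it follows from the Burkholder--Davis--Gundy inequality together with the boundedness of $g$; and for $K^n$ it follows from $(\text{A}_3)$ and the finiteness of the jump intensity $\alpha = \mu((0,\infty))$, which controls $E|K^n(t)|^2$ by a constant times $t$. Since the state space $\mathbb{R}^2\times\mathbb{R}^2$ is finite-dimensional, this bound, via Markov's inequality, already yields tightness of the one-dimensional marginals of $\mathcal{P}^n$.

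The second and more delicate step is the increment control. For the continuous part $A^n + M^n$ I would prove a Kolmogorov--Chentsov estimate of the form $E[|A^n(t)+M^n(t) - A^n(s)-M^n(s)|^4]\le C|t-s|^2$, uniformly in $n$: the drift contributes a term of order $|t-s|^4$, and the martingale part is handled by the fourth-moment BDG inequality, using again that $g$ is bounded. This gives equicontinuity of the laws of $A^n+M^n$ and hence their tightness in $C(\bar{S},\mathbb{R}^2\times\mathbb{R}^2)$. For the jump part $K^n$, a Kolmogorov estimate is \emph{not} available, because for a compound Poisson integral $E|K^n(t)-K^n(s)|^2$ is only of order $|t-s|$; here I would instead invoke Aldous' criterion, bounding $E[|K^n(\tau_n)-K^n(\sigma_n)|^2]\le C\delta$ for stopping times with $\sigma_n\le \tau_n\le\sigma_n+\delta$ (again using the boundedness of $\rho$ and the finite intensity). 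Combining the two, the full sequence $\{\mathcal{P}^n\}$ is tight.

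I expect the main obstacle to be the interplay of the time-discretisation $h^n$ and the feedback of $\Gamma Y^n$ inside the coefficients: all moment and increment estimates must be made uniform in $n$ despite the integrands being evaluated at the lagged, piecewise-constant times $h^n(y)$, and one relies on the Lipschitz continuity of the Skorokhod map $\Gamma$ from Theorem \ref{deterministic-theo} to transfer boundedness from $Y^n$ to $X^n=\Gamma Y^n$. A secondary technical point worth flagging is that, strictly, the presence of $K^n$ makes the jump component a discontinuous (c\`adl\`ag) process, so that the genuinely continuous piece $X_0+A^n+M^n$ is tight in $C(\bar{S})$ while the jump piece is naturally controlled in the Skorokhod space $D(\bar{S})$; the reconciliation rests on the finiteness of $\alpha$, which guarantees that only finitely many jumps occur on $[0,T]$ almost surely.
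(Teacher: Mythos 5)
Your route is genuinely different from the paper's. The paper does not argue via marginal tightness plus a modulus-of-continuity/Aldous criterion; it works with the Ascoli--Arzel\`a-type relatively compact sets of \eqref{km1m2}, i.e.\ an $L^\infty$ bound together with a fractional Sobolev bound $[\,\cdot\,]_{W^{\alpha,p}}\leq M_2$ with $\alpha p>1$, and controls both by Markov's inequality: a first-moment bound on $\sup_{t}|Y^n(t)|$ (boundedness of the drift plus Burkholder--Davis--Gundy for the stochastic integral) and an increment estimate $E[|Y^n(t)-Y^n(r)|^p]\leq C(t-r)^{p/2}$, after which one chooses $\alpha<1/2$, $p>2$, $\alpha p>1$. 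The raw ingredients (bounded coefficients, BDG) are the same as yours, so your continuous-part analysis and the paper's are essentially parallel. Where you genuinely diverge is the jump term, and there your treatment is the more careful one: you correctly observe that a Kolmogorov--Chentsov estimate is unavailable for the compound Poisson integral, since $E|K^n(t)-K^n(s)|^2$ is only of order $|t-s|$, and you switch to Aldous' criterion. The paper never confronts this: the jump integrals appearing in \eqref{modulus2} silently disappear in the passage to \eqref{e1}, and in fact no bound on $[Y^n]_{W^{\alpha,p}}$ with $\alpha p>1$ could hold for them, because a c\`adl\`ag path with even one jump has infinite $W^{\alpha,p}$ seminorm in that regime.

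The step of yours that would fail is the final ``reconciliation.'' Finiteness of the jump intensity gives finitely many jumps on $[0,T]$, but a single jump already places the path outside $C(\bar{S};\mathbb{R}^2\times\mathbb{R}^2)$, so the laws of $Y^n$ are then not even supported on that space and no argument can make them tight in it; what your estimates actually prove is tightness in the Skorokhod space $D(\bar{S})$ with the $J_1$ topology, which is not the statement of Lemma \ref{tightness}. You should know, however, that this defect is inherited from the lemma itself: the claim of tightness in $C$ is incompatible with genuinely discontinuous $Y^n$, and the paper's own proof ``succeeds'' only because it drops the jump terms at \eqref{e1}. A smaller caveat: your boundedness of the coefficients along trajectories is derived from compactness of $\bar{D}$, which is not among the assumptions $(\text{A}_1)$--$(\text{A}_3)$ (and fails in the paper's own application, where $D=[0,\infty)$); the paper instead simply asserts that the drift and diffusion are bounded. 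Aside from these two points --- the gluing step and the provenance of the boundedness --- your moment bounds, the fourth-moment BDG estimate for the continuous part, and the Aldous bound for the jump part are correct and uniform in $n$.
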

\begin{proof}
	Let us introduce the following relative compact set in $C(\bar{S}, \mathbb{R}^2 \times \mathbb{R}^2)$
	\begin{align}
		K_{M_1M_2} = \left\{f \in C(\bar{S}; \mathbb{R}^2 \times \mathbb{R}^2): \|f\|_{L^\infty(S; \mathbb{R}^2 \times \mathbb{R}^2)} \leq M_1, [f]_{C^\alpha(\bar{S};\mathbb{R}^2 \times \mathbb{R}^2)} \leq M_2\right\}.
	\end{align}
	Now, we will show that for a given $\varepsilon > 0$, there are $M_1, M_2 > 0$ such that
	\begin{align}
		P(Y^n(\cdot) \in K_{M_1M_2}) \leq \varepsilon, \text{ for all } n \in \mathbb{N}.
	\end{align}
	This means that
	\begin{align}
		P(\|Y^n(t)\|_{L^\infty(S;\mathbb{R}^2 \times \mathbb{R}^2)} > M_1 \text{ or } [Y^n(\cdot)]_{C^\alpha(\bar{S};\mathbb{R}^2 \times \mathbb{R}^2)} > M_2) \leq \varepsilon.
	\end{align}
	A sufficient condition for this to happen is
	\begin{align}
		P(\|Y^n(t)\|_{L^\infty(S; \mathbb{R}^2\times \mathbb{R}^2)} > M_1) < \frac{\varepsilon}{2} \text{ and } P([Y^n(\cdot)]_{C^\alpha(\bar{S}; \mathbb{R}^2\times \mathbb{R}^2)} > M_2) < \frac{\varepsilon}{2},
	\end{align}
	where $Y(\cdot)$ denotes either $Y(t)$ or $Y(r)$.
	
	We consider first $	P(\|Y^n(\cdot)\|_{L^\infty(S, \mathbb{R}^2\times \mathbb{R}^2)} > M_1) < \frac{\varepsilon}{2} $. Using Markov's inequality (see e.g. \cite{Jacod2004}, Corollary 5.1), we get
	\begin{align}
		P(\|Y^n(t)\|_{L^\infty(S; \mathbb{R}^2\times \mathbb{R}^2)} > M_1) \leq \frac{1}{M_1}E[\sup_{t\in \bar{S}} |Y^n(t)|],
	\end{align}
	but 
	{\small\begin{align}
		\sup_{t \in \bar{S}}|Y^n(t)| &= \sup_{t \in \bar{S}} \Bigg\{\left|X_{A,0} + \int_{0}^t f_A((\Gamma Y^n)(h^n(y)))dy+ \int_0^tg_A((\Gamma Y^n)(h^n(y)))dW_A(y)\right|,\nonumber\\& \left|X_{P,0} + \int_{0}^t f_P((\Gamma Y^n)(h^n(y)))dy + \int_0^tg_P((\Gamma Y^n)(h^n(y)))dW_P(y)\right|\Bigg\}.
	\end{align}}
	We estimate 
{\small	\begin{align}\label{sup1}
		\sup_{t \in \bar{S}}|Y^n(t)| &\leq \sup_{t \in \bar{S}} \Bigg\{\left|X_{A,0}\right| + \left|\int_{0}^t f_A((\Gamma Y^n)(h^n(y)))dy\right|+ \left|\int_0^tg_A((\Gamma Y^n)(h^n(y)))dW(y)\right|, \nonumber\\& \left|X_{P,0}\right| + \left|\int_{0}^t f_P((\Gamma Y^n)(h^n(y)))dy\right| + \left|\int_0^tg_P((\Gamma Y^n)(h^n(y)))dW(y)\right|\Bigg\}.
	\end{align}}
	Since $F_1, F_2$ are bounded, then we have 
	\begin{align}
		\int_{0}^T f_A((\Gamma Y^n)(h^n(y)))dy \leq C \text{ and } \int_{0}^T f_P((\Gamma Y^n)(h^n(y)))dy \leq C.
	\end{align}
	Taking the expectation on \eqref{sup1}, we are led to 
	\begin{align}
		E\left[ \sup_{t\in \bar{S}} |Y^n(t)|\right] \leq C + E\left[\sup_{t \in \bar{S}}\left|\int_0^t g((\Gamma Y^n)(h^n(y)))dW(y)\right|\right].
	\end{align}
	On the other hand, the Burlkholder-Davis-Gundy's inequality \footnote[1]{See e.g. \cite{Karatzars2000}, Theorem 3.28 (The Burlkholder-Davis-Gundy's inequality). Let $M \in \mathcal{M }^{c,\text{loc}}$ and $M_t^{*} := \max_{0\leq s\leq t}|M_s|$. For every $m > 0$, there exists universal positive constants $k_m$, $K_m$ (depending only on $m$), such that the inequalities $$k_m E(\langle M\rangle_T^m \leq E[(M_T^{*})^{2m}] \leq K_mE(\langle M\rangle_T^m)$$ hold for every stopping time $T$. Note that $\mathcal{M }^{c,\text{loc}}$ denotes the space of continuous local martingales and $\langle X\rangle$ represents the quadratic variance process of $X \in \mathcal{M }^{c,\text{loc}}$.}  implies
	\begin{align}
		E\left[\sup_{t \in \bar{S}}\left|\int_0^t g((\Gamma Y^n)(h^n(y)))dB(y)\right|\right]  &\leq K_{1/2}E\left[\int_0^t |g((\Gamma Y^n)(h^n(y)))|^2dy\right]^{1/2} \nonumber\\
		&\leq K_{1/2}t^{1/2},
	\end{align}
	for $K_{1/2} >0$.\\
	Then, we have the following estimate
	\begin{align}
		E\left[ \sup_{t\in \bar{S}} |Y^n(t)|\right]  \leq C+ E\left[\int_0^t |g((\Gamma Y^n)(h^n(y)))|^2dy\right]^{1/2} \leq C.
	\end{align}
	Hence, for $\varepsilon > 0$, we can choose $M_1>0$ such that $	P(\|Y^n\|_{L^\infty(S; \mathbb{R}^2\times \mathbb{R}^2)} > M_1) < \frac{\varepsilon}{2} $.
	
	In the sequel, we consider the second inequality $P([Y^n(\cdot)]_{C^\alpha(\bar{S}; \mathbb{R}^2\times \mathbb{R}^2)} > M_2) < \frac{\varepsilon}{2}$, this reads
	\begin{align}
		P([Y^n(\cdot)]_{C^\alpha(\bar{S};\mathbb{R}^2 \times \mathbb{R}^2)} > M_2) = P\left(\sup_{t\neq r; t,r \in \bar{S}} \frac{|Y^n(t) - Y^n(r)|}{|t-r|^{\alpha}} > M_2\right) \leq \frac{\varepsilon}{2}.
	\end{align}
	
	Let us introduce another class of compact sets now in the Sobolev space \\$W^{\alpha, p}(0,T; \mathbb{R}^2\times \mathbb{R}^2)$ (which for suitable exponents $\alpha p - \gamma > 1$ lies in $C^\gamma(\bar{S}; \mathbb{R}^2\times \mathbb{R}^2)$). Additionally, we recall the relatively compact sets $K'_{M_1M_2}$, defined as in \ref{appendix}, such that
	\begin{align}\label{km1m2}
		K'_{M_1M_2} = \left\{ f \in C(\bar{S};\mathbb{R}^2 \times \mathbb{R}^2): \|f\|_{L^{\infty}(S; \mathbb{R}^2 \times \mathbb{R}^2)} \leq M_1, [f]_{W^{\alpha,p}(S; \mathbb{R}^2 \times \mathbb{R}^2)} \leq M_2\right\},
	\end{align}
	where $\alpha p>1$ (see e.g. \cite{Flandoli95}, \cite{Colangeli2019}). Having this in mind, we wish to prove that there exits $\alpha \in (0,1)$ and $p>1$ with $\alpha p>1$ together with the property: given $\varepsilon > 0$, there is $M_2>0$ such that 
	\begin{align}
		P([Y^n(\cdot)]_{W^{\alpha, p}(S; \mathbb{R}^2 \times \mathbb{R}^2)} > M_2) < \frac{\varepsilon}{2} \text{ for every } n \in \mathbb{N}.
	\end{align}
	Using Markov's inequality, we obtain 
	\begin{align}
		P([Y^n(\cdot)]_{W^{\alpha, p}(S; \mathbb{R}^2 \times \mathbb{R}^2)} > M_2) &\leq \frac{1}{M_2} E\left[\int_0^T \int_0^T \frac{|Y^n(t) - Y^n(r)|^p}{|t-r|^{1+\alpha p}} dt dr\right] \nonumber\\
		&= \frac{C}{M_2} \int_0^T \int_0^T \frac{E[|Y^n(t) - Y^n(r)|^p]}{|t-r|^{1+\alpha p}} dt dr.
	\end{align}
	For $t>r$, we have
	\begin{align}
		Y^n(t) - Y^n(r) &= \begin{bmatrix}
			\int_r^tf_A((\Gamma Y_A^n)(h^n(y)),(\Gamma Y_P^n)(h^n(y)))dy\\\int_r^t f_P((\Gamma Y_A^n)(h^n(y)),(\Gamma Y_P^n)(h^n(y)))dy
		\end{bmatrix} \\&+
		\begin{bmatrix}
			\int_r^t g_P((\Gamma Y_A^n)(h^n(y)),(\Gamma Y_P^n)(h^n(y))) dW_A(y)\\\int_r^t g_P((\Gamma Y_A^n)(h^n(y)),(\Gamma Y_P^n)(h^n(y))) dW_P(y)
		\end{bmatrix}\\&
		+\begin{bmatrix}
			\int_r^t \rho_A(y) dJ_A(y)\\\int_r^t \rho_P(y) dJ_P(y)
		\end{bmatrix}.
	\end{align}
	Let us introduce some further notation. For a vector $u=(u_1,u_2)$, we set $|u|:= |u_1|+|u_2|$. At this moment, we consider the following expression 
	\begin{align}
		|Y^n(t) - Y_r^n(r)| &= \Bigg| \int_{r}^t f_A((\Gamma Y_A^n)(h^n(y)),(\Gamma Y_P^n)(h^n(y)))dy \nonumber\\&+  \int_{r}^t g_A((\Gamma Y_A^n)(h^n(y)),(\Gamma Y_P^n)(h^n(y))) dW_A(y)\Bigg|\nonumber\\&+ \Bigg| \int_{r}^t f_P((\Gamma Y_A^n)(h^n(y)),(\Gamma Y_P^n)(h^n(y)))dy\nonumber\\&+\int_r^tg_P((\Gamma Y_A^n)(h^n(y)),(\Gamma Y_P^n)(h^n(y))) dW_P(y)\Bigg| \nonumber\\
		&+ \left| \int_{r}^t\rho_A((\Gamma Y_A^n)(h^n(y))) dJ_A(y) +\int_r^t\rho_P((\Gamma Y_P^n)(h^n(y))) dJ_P(y)\right|.
	\end{align}
	Taking the modulus up to the power $p>1$ and applying Minkowski inequality, we have 
\begin{align}\label{modulus2}
		|Y_t^n(t) - Y_r^n(r)|^p &= \Bigg(\Bigg| \int_{r}^t f_A((\Gamma Y_A^n)(h^n(y)),(\Gamma Y_P^n)(h^n(y)))dy \nonumber \\&+  \int_{r}^t g_A((\Gamma Y_A^n)(h^n(y)),(\Gamma Y_P^n)(h^n(y))) dW_A(y)\Bigg| \nonumber \\&+ \Bigg| \int_{r}^t f_P((\Gamma Y_A^n)(h^n(y)),(\Gamma Y_P^n)(h^n(y)))dy\nonumber \\&+\int_r^tg_P((\Gamma Y_A^n)(h^n(y)),(\Gamma Y_P^n)(h^n(y))) dW_P(y)\Bigg| \nonumber\\&+ \left| \int_{r}^t\rho_A((\Gamma Y_A^n)(h^n(y))) dJ_A(y) +\int_r^t\rho_P((\Gamma Y_P^n)(h^n(y))) dJ_P(y)\right| \Bigg)^p\nonumber \\
		&\leq C\Bigg(\Bigg| \int_{r}^t f_A((\Gamma Y_A^n)(h^n(y)),(\Gamma Y_P^n)(h^n(y)))dy \nonumber \\&+  \int_{r}^t g_A((\Gamma Y_A^n)(h^n(y)),(\Gamma Y_P^n)(h^n(y))) dW_A(y)\Bigg|^p \nonumber \\&+ \Bigg| \int_{r}^t f_P((\Gamma Y_A^n)(h^n(y)),(\Gamma Y_P^n)(h^n(y)))dy\nonumber \\&+\int_r^tg_P((\Gamma Y_A^n)(h^n(y)),(\Gamma Y_P^n)(h^n(y))) dW_P(y)\Bigg|^p\nonumber\end{align}
	\begin{align}&+ \left| \int_{r}^t\rho_A((\Gamma Y_A^n)(h^n(y))) dJ_A(y) +\int_r^t\rho_P((\Gamma Y_P^n)(h^n(y))) dJ_P(y)\right|^p\Bigg) \nonumber\\
		&\leq  C\Bigg(\int_{r}^t |f_A((\Gamma Y_A^n)(h^n(y)),(\Gamma Y_P^n)(h^n(y)))|^pdy \nonumber \\&+ \int_{r}^t |g_A((\Gamma Y_A^n)(h^n(y)),(\Gamma Y_P^n)(h^n(y)))|^pdW_A(y)\nonumber\\ &+ \int_{r}^t |f_P((\Gamma Y_A^n)(h^n(y)),(\Gamma Y_P^n)(h^n(y)))|^pdy\nonumber \\&+\int_r^t \left|g_P((\Gamma Y_A^n)(h^n(y)),(\Gamma Y_P^n)(h^n(y)))\right|^pdW_P(y) \nonumber \\&+  \int_{r}^t \Bigg|\rho_A((\Gamma Y_A^n)(h^n(y)))\Bigg|^p dJ_A(y) \nonumber \\&+\int_r^t \Bigg|\rho_P((\Gamma Y_P^n)(h^n(y)))\Bigg|^p dJ_P(y)\Bigg).
	\end{align}
	Taking the expectation on \eqref{modulus2}, we obtain the following estimate
	\begin{align}\label{e1}
		E[|Y^n(t) - Y^n(r)|^p] \leq C(t-r)^p + CE\left[ \left|\int_r^t \sigma((\Gamma Y^n)(h^n(y)))dW(y)\right|^p\right].
	\end{align}
	Applying the Burkholder-Davis-Gundy's inequality to the second term of the right hand side of \eqref{e1}, we obtain
	\begin{align}
		E\left[ \left|\int_r^t \sigma((\Gamma Y^n)(h^n(y)))dW(y)\right|^p\right] \leq C E\left[ \left(\int_r^t dy \right)^{p/2}\right] \leq C(t-r)^{p/2}.
	\end{align}
	On the other hand, if $\alpha < \frac{1}{2}$, then
	\begin{align}
		\int_0^T \int_0^T \frac{1}{|t-r|^{1+(\alpha - \frac{1}{2}) p}} dt dr < \infty.
	\end{align}
	Consequently, we can pick $\alpha < \frac{1}{2}$. Taking now $p>2$ together with the constraint $\alpha p >1$, we can find $M_2 > 0$ such that 
	\begin{align}
		P([Y^n(t)]_{W^{\alpha, p}(S;\mathbb{R}^2 \times \mathbb{R}^2)} > M_2)  < \frac{\varepsilon}{2}.
	\end{align}
	This argument completes the proof of the Lemma.
\end{proof}

From Lemma \ref{tightness}, we have obtained that the sequence $\{\mathcal{P}^n\}$ is tight in $C(\bar{S};\mathbb{R}^2 \times \mathbb{R}^2)$. Applying the Prokhorov's Theorem, there are subsequences $\{\mathcal{P}^{n_k}\}$ which converge weakly to some $\mathcal{P}(Y(t))$ as $n \to \infty$. For simplicity of the notation, we denote these subsequences by $\{\mathcal{P}^{n}\}$. This means that we have $\{\mathcal{P}^n\}$ converging weakly to some probability measure $\mathcal{P}$ on Borel sets in $C(\bar{S};\mathbb{R}^2 \times \mathbb{R}^2)$. 

Since we have that $\mathcal{P}^n(Y^n(t))$ converges weakly to $\mathcal{P}(Y(t))$ as $n \to \infty$, by using the \textquotedblleft Skorokhod Representation Theorem\textquotedblright, there exists a probability space $(\widetilde{\Omega}, \tilde{\mathcal{F}},\tilde{P})$ with the filtration $\{\tilde{\mathcal{F}}_t\}$ and $\tilde{Y}^n(t)$, $\tilde{Y}(t)$ belonging to $C(\bar{S}; \mathbb{R}^2 \times \mathbb{R}^2)$ with $n\in \mathbb{N}$, such that $\mathcal{P}(\tilde{Y}) = \mathcal{P}(Y)$, $\mathcal{P}(\tilde{Y}^n(t)) = \mathcal{P}(Y^n(t))$, and $\tilde{Y}^n(t) \to \tilde{Y}(t)$ as $n \to \infty$, $\tilde{P}-$a.s.
Moreover, let $(\tilde{X}^n(t), \tilde{\Phi}^n(t))$ and $(\tilde{X}(t), \tilde{\Phi}(t))$ be the solutions of the following Skorokhod equations
\begin{align}\label{sols}
	\tilde{X}^n(t) &= \tilde{Y}^n(t) + \tilde{\Phi}^n(t), \nonumber\\
	\tilde{X}(t) &= \tilde{Y}(t) + \tilde{\Phi}(t),
\end{align}
respectively.  Then the continuity result in Theorem \ref{deterministic-theo} implies that the sequence $(\tilde{X}^n(t), \tilde{\Phi}^n(t))$ converges to $(\tilde{X}(t), \tilde{\Phi}(t)) \in C(\bar{S}; \bar{D}\times \bar{D}) \times C(\bar{S})$ uniformly in $t\in \bar{S}$, $\tilde{P}-$a.s as $n \rightarrow \infty$. Hence, we  still need to prove that $\tilde{Y}^n(t)$ converges to $\tilde{Y}(t)$ in some sense, where we denote
\begin{align}
	\tilde{Y}^n(t) := \tilde{X}_0 + \int_{0}^tf(\tilde{X}^n(h_n(y))dy + \int_0^t g(\tilde{X}^n(h_n(y))d\tilde{W}(y) + \int_{0}^{t}\rho(X^n(y))dJ(y),
\end{align}
and 
\begin{align}
	\tilde{Y}(t) := \tilde{X}_0 + \int_{0}^tf(\tilde{X}^n(y))dy + \int_0^t g(\tilde{X}^n(y))d\tilde{W}(y) +  \int_{0}^{t}\rho(X(y))dJ(y).
\end{align}
To complete the proof of the existence of solutions to the problem \eqref{main-skorohod}-\eqref{initial-cond} in the sense of Definition \ref{def}, we consider the following Lemma.
\begin{lemma}\label{solu-theo}
	The pair $(\tilde{X}(t), \tilde{\Phi}(t)) \in C(\bar{S}; \bar{D}\times \bar{D}) \times C(\bar{S})$ cf. \eqref{sols} is a solution of the Skorokhod-type system
	\begin{align}
		\tilde{X}(t) = \tilde{Y}(t) + \tilde{\Phi}(t),
	\end{align}
	where  
	\begin{align}
		\tilde{Y}(t) := \tilde{X}_0 + \int_{0}^tf(\tilde{X}^n(y))dy + \int_0^t g(\tilde{X}^n(y))d\tilde{W}(y),
	\end{align} 
	and $\tilde{X}_0 \in \bar{D}$.
\end{lemma}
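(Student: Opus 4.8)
The plan is to identify the limit pair $(\tilde{X}, \tilde{\Phi})$ as a solution of the Skorokhod-type system in the sense of Definition \ref{def}, by passing to the limit term-by-term in the equation defining $\tilde{Y}^n$. I would start from the almost sure convergences already secured: on $(\widetilde{\Omega}, \tilde{\mathcal{F}}, \tilde{P})$ one has $\tilde{Y}^n \to \tilde{Y}$ uniformly on $\bar{S}$, $\tilde{P}$-a.s., and by the continuity of the Skorokhod map $\Gamma$ from Theorem \ref{deterministic-theo}, also $(\tilde{X}^n, \tilde{\Phi}^n) \to (\tilde{X}, \tilde{\Phi})$ uniformly, $\tilde{P}$-a.s. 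Since $\tilde{X}^n = \Gamma \tilde{Y}^n$ is $\bar{D}$-valued and $\bar{D}$ is closed, the limit $\tilde{X}$ is a $\bar{D}$-valued continuous process, which is condition (i). Because the mesh $2^{-n}$ of the step function $h^n$ tends to zero, $h_n(y) \to y$ for every $y$, and
\[
	|\tilde{X}^n(h_n(y)) - \tilde{X}(y)| \le \|\tilde{X}^n - \tilde{X}\|_{\infty} + |\tilde{X}(h_n(y)) - \tilde{X}(y)| \longrightarrow 0,
\]
by the uniform convergence of $\tilde{X}^n$ together with the uniform continuity of the sample path $\tilde{X}$ on the compact interval $\bar{S}$. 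The global Lipschitz property of $f$ in $(\text{A}_1)$ then gives $f(\tilde{X}^n(h_n(y))) \to f(\tilde{X}(y))$, and dominated convergence yields $\int_0^t f(\tilde{X}^n(h_n(y)))\,dy \to \int_0^t f(\tilde{X}(y))\,dy$ uniformly in $t$, $\tilde{P}$-a.s.

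Next, and this is the delicate step, I would pass to the limit in the stochastic integral. The clean way is to have carried the driving Brownian motion $W$ and the compound Poisson process $J$ inside the tightness argument of Lemma \ref{tightness}, so that the Skorokhod representation supplies a limiting Brownian motion $\tilde{W}$ on $(\widetilde{\Omega}, \tilde{\mathcal{F}}, \tilde{P})$ with $\tilde{W}^n \to \tilde{W}$, $\tilde{P}$-a.s. Using the It\^o isometry together with the global Lipschitz bound on $g$ from $(\text{A}_1)$,
\[
	\tilde{E}\left| \int_0^t \bigl( g(\tilde{X}^n(h_n(y))) - g(\tilde{X}(y)) \bigr)\, d\tilde{W}(y) \right|^2 \le C \int_0^T \tilde{E}\,|\tilde{X}^n(h_n(y)) - \tilde{X}(y)|^2\, dy \longrightarrow 0,
\]
where the right-hand side vanishes by the pointwise convergence just established and dominated convergence, the integrands being uniformly controlled by the a priori $L^2$ bounds from Lemma \ref{tightness}. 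The jump term is handled identically, with $(\text{A}_3)$ playing the role of $(\text{A}_1)$:
\[
	\tilde{E}\left| \int_0^t \bigl( \rho(\tilde{X}^n(h_n(y))) - \rho(\tilde{X}(y)) \bigr)\, dJ(y) \right|^2 \le C_\rho \int_0^T \tilde{E}\,|\tilde{X}^n(h_n(y)) - \tilde{X}(y)|^2\, dy \longrightarrow 0,
\]
using the Lipschitz estimate of $(\text{A}_3)$ and the finite intensity of the compound Poisson process.

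Combining these limits, $\tilde{Y}^n(t)$ converges to $\tilde{X}_0 + \int_0^t f(\tilde{X}(y))\,dy + \int_0^t g(\tilde{X}(y))\,d\tilde{W}(y) + \int_0^t \rho(\tilde{X}(y))\,dJ(y)$, and matching with $\tilde{Y}^n \to \tilde{Y}$ identifies $\tilde{Y}$ with this expression; passing to the limit in $\tilde{X}^n = \tilde{Y}^n + \tilde{\Phi}^n$ then yields $\tilde{X} = \tilde{Y} + \tilde{\Phi}$. It remains to verify conditions (ii)--(iii) of Definition \ref{def} for $\tilde{\Phi}$, namely $\tilde{\Phi}(0) = 0$, bounded variation, the representation $\tilde{\Phi}(t) = \int_0^t \textbf{n}(y)\, d|\tilde{\Phi}|_y$ with $\textbf{n}(y) \in \mathcal{N}_{\tilde{X}(y)}$, and that $|\tilde{\Phi}|$ increases only on $\{\tilde{X} \in \partial D\}$. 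These are inherited from the corresponding properties of $\tilde{\Phi}^n$: lower semicontinuity of the total variation under uniform convergence provides the bounded-variation bound, while the support and inward-normal conditions follow from the closedness of the reflection relation, exactly as encoded in the continuity statement of Theorem \ref{deterministic-theo}.

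The main obstacle I expect is precisely the stochastic-integral convergence: one must ensure that the Skorokhod representation furnishes a single limiting Brownian motion $\tilde{W}$ against which both the approximating integrals $\int_0^t g(\tilde{X}^n(h_n(y)))\,d\tilde{W}(y)$ and the limit $\int_0^t g(\tilde{X}(y))\,d\tilde{W}(y)$ are defined, so that the It\^o-isometry estimate above is meaningful. This is where the martingale-solution viewpoint of Flandoli \cite{Flandoli95} is genuinely needed, and one has to check that the filtration $\{\tilde{\mathcal{F}}_t\}$ on the new space makes $\tilde{W}$ a Brownian motion with respect to which $\tilde{X}$ is adapted, so that $(\tilde{X}, \tilde{\Phi})$ is a bona fide weak solution.
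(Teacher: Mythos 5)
Your proposal is correct and shares the paper's overall scaffolding (Skorokhod representation, continuity of the deterministic Skorokhod map from Theorem \ref{deterministic-theo}, then term-by-term identification of the limit), but it handles the crucial step --- convergence of the stochastic integral --- by a genuinely different technique. The paper approximates $\int_0^t g(\tilde{X}^n(h_n(y)))\,d\tilde{W}(y)$ by Riemann--Stieltjes sums and passes to the limit inside the sums (see \eqref{reimann}, which as written even conflates the limit index $n$ with the summation index and so is only a formal argument); you instead estimate the difference of the approximating and limiting integrals directly via the It\^{o} isometry, the global Lipschitz bound of $(\text{A}_1)$, and dominated convergence, obtaining convergence in $L^2$. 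Your route buys rigor and uniformity: it yields a quantitative estimate, it extends verbatim to the jump term through the Lipschitz-type bound in $(\text{A}_3)$ (which the paper's Riemann-sum display silently omits), and it forces you to confront the genuine subtlety that both integrals must be taken against a \emph{single} Brownian motion $\tilde{W}$ on the new probability space --- a point the paper assumes tacitly and you correctly isolate as the place where the martingale-solution framework of \cite{Flandoli95} (applying the representation theorem to the tuple of processes including the drivers, and checking adaptedness with respect to $\{\tilde{\mathcal{F}}_t\}$) is actually needed. What the paper's approach buys in exchange is elementary, almost-sure reasoning that avoids any integrability discussion, though here that discussion is harmless since $\bar{D}$ is bounded and $g$ is Lipschitz, so the integrands are uniformly bounded. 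Your additional verification of conditions (i)--(iii) of Definition \ref{def} for the limit pair is welcome but strictly speaking redundant, since $(\tilde{X},\tilde{\Phi})$ is defined in \eqref{sols} as the solution of the Skorokhod problem for $\tilde{Y}$, so those properties hold by Theorem \ref{deterministic-theo}.
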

\begin{proof}
	We consider the term $\int_0^t g(\tilde{X}^n(h_n(y)))d\tilde{W}(y)$ with the step process \\$g(\tilde{X}^n(h_n(y)))$. Approximating this stochastic integral by Riemann-Stieltjes sums (see e.g. \cite{Evans2013}), it yields 
	\begin{align}\label{reimann-app}
		\int_{0}^tg(\tilde{X}^n(h_n(y)))d\tilde{W}(y) = \sum_{k=0}^{n-1} g(\tilde{X}^n(h_n(t)))(W(t_{k+1}^n) - W(t_k^n)).
	\end{align}
	By taking the limit $n \to \infty$ in \eqref{reimann-app}, this gives
	\begin{align}\label{reimann}
		\lim_{n\to \infty}\int_{0}^tg(\tilde{X}^n(h_n(y)))d\tilde{W}(y) = \lim_{n\to \infty}\sum_{k=0}^{n-1} g(\tilde{X}^n(h_n(t)))(W(t_{k+1}^n) - W(t_k^n)) \nonumber\\=\sum_{k=0}^{n-1} g(\tilde{X}(t))(W(t_{k+1}) - W(t_k))  = \int_0^t g(\tilde{X}(y))d\tilde{W}(y).
	\end{align}
	By using the fact that $(\tilde{X}^n(t), \tilde{\Phi}^n(t))$ converges to $(\tilde{X}(t), \tilde{\Phi}(t))\in C(\bar{S}; \bar{D} \times \bar{D})\times C(\bar{S})$ uniformly in $t\in [0,T]$ $\tilde{P}-$a.s as $n \rightarrow \infty$ together with \eqref{reimann}, we obtain that
	\begin{align}
		\tilde{X}^n(t) &= \tilde{X}_0 + \int_{0}^tf(\tilde{X}^n(h^n(y)))dy + \int_0^t g(\tilde{X}^n(h^n(y)))d\tilde{W}(y) \nonumber\\&+\int_{0}^{t}\rho(X^n(h^n(y)))dJ(y) +  \tilde{\Phi}^n(t) \nonumber\\
		&= \tilde{Y}^n(t) + \tilde{\Phi}^n(t)
	\end{align}
	converges to 
	\begin{align}
		\tilde{X}(t) &= \tilde{X}_0 + \int_{0}^tf(\tilde{X}(y))dy + \int_0^t g(\tilde{X}(y))d\tilde{W}(y) + \int_{0}^{t}\rho(X(y))dJ(y) + \tilde{\Phi}(t)\nonumber\\
		&=\tilde{Y}(t) + \tilde{\Phi}(t), \quad \tilde{P}-\text{a.s as } n\to \infty.
	\end{align}
\end{proof}	
\subsubsection{Proof of the uniqueness}\label{uniqueness-proof}

We take $X(t), X'(t) \in C(\bar{S}; \bar{D} \times \bar{D})$ as two solutions to \eqref{main-skorohod}-\eqref{initial-cond} with the same initial values $X(0) = X'(0)$. 

Moreover, suppose that the supports of $b$ and $\sigma$ are included in the same ball $B(x_0,\delta)$ for some $x_0 \in \partial D$. Let us recall the assumption $(\text{A}_6)$, where $D$ satisfies the following condition: There exists a positive number $\eta$ such that for each $x_0 \in \partial D$ we can find $h\in C^2(\mathbb{R}^2 )$ satisfying $$\langle y-x, \textbf{n}\rangle + \frac{1}{\nu}\langle\nabla h(x), \textbf{n}\rangle|y-x|^2 \geq 0$$
for any $x \in B(x_0,\delta') \cap \partial D, y \in B(x_0, \delta'') \cap \partial \bar{D}$ and $\textbf{n} \in \mathcal{N}_{x}$. Using the proof idea of Lemma $5.3$ in \cite{Saisho1987}, we consider $f(x) = \langle \textbf{l}, x-x_0\rangle$. Then, we have 
\begin{align}\label{cond-1}
	\langle X(s) - X'(s), d\Phi(s) - d\Phi'(s)\rangle - \frac{1}{\eta}|X(s) - X'(s)|^2\langle\textbf{l}, d\Phi(s) - d\Phi'(s)\rangle \nonumber\\
	=-(\langle X(s) - X'(s), d\Phi(s)\rangle +  \frac{1}{\eta}|X(s) - X'(s)|^2\langle\textbf{l}, d\Phi(s)\rangle) \nonumber\\- (\langle X(s) - X'(s), d\Phi'(s)\rangle +  \frac{1}{\eta}|X(s) - X'(s)|^2\langle\textbf{l}, d\Phi'(s)\rangle).
\end{align}
Moreover, using the assumption $(\text{A}_6)$, we have the following estimates
\begin{align}\label{pre-1}
	\langle X(s) - X'(s), d\Phi(s)\rangle +  \frac{1}{\eta}|X(s) - X'(s)|^2\langle\textbf{l}, d\Phi(s)\rangle \geq 0 
\end{align}
and 
\begin{align}\label{pre-2}
	\langle X(s) - X'(s), d\Phi'(s)\rangle +  \frac{1}{\eta}|X(s) - X'(s)|^2\langle\textbf{l}, d\Phi'(s)\rangle \geq 0.
\end{align}
Combinning \eqref{cond-1}-\eqref{pre-2}, we obtain the following estimate
\begin{align}
	\langle X(s) - X'(s), d\Phi(s) - d\Phi'(s)\rangle - \frac{1}{\eta}|X(s) - X'(s)|^2\langle\textbf{l}, d\Phi(s) - d\Phi'(s)\rangle \leq 0,
\end{align}
where $\textbf{l}$ is the unit vector appearing in Condition $(\text{A}_5)$.

Using similar ideas as in \cite{Lions1984} (see Proposition $4.1$), we have the following estimate
\begin{align}\label{est-uni1}
	&|X(t) - X'(t)|^2\exp\left\{-\frac{1}{\eta}(\Phi(X(t)) - \Phi'(X(t)))\right\} \leq \nonumber\\&2\Bigg(\exp\left\{-\frac{1}{\eta}(\Phi(X(y)) - \Phi'(X(y)))\right\}\int_0^t(b(X(y)) - b(X'(y)))dy \nonumber\\&+ \exp\left\{-\frac{1}{\eta}(\Phi(X(y)) - \Phi'(X(y)))\right\}\int_0^t(\sigma(X_y(y)) - \sigma(X(y))) dW(y) \Bigg)^2 \nonumber\\&+ \exp\left\{-\frac{1}{\eta}(\Phi(X(y)) - \Phi'(X(y)))\right\}\nonumber\\&\times\int_0^t\left(2\langle X(y) - X'(y),\textbf{l}\rangle - \frac{1}{\eta}|X(y) - X'(y)|^2\right)d\Phi(y)\nonumber\\
	&+\exp\left\{-\frac{1}{\eta}(\Phi(X(y)) - \Phi'(X(y)))\right\}\nonumber\\&\times\int_0^t\left(2\langle X(y) - X'(y),\textbf{l}\rangle - \frac{1}{\eta}|X(y) - X'(y)|^2\right)d\Phi'(y) \nonumber\\
	&+2\int_0^t\left|b(X(y)) - b(X'(y))\right|^2\exp\left\{-\frac{2}{\eta}(\Phi(X(y)) - \Phi'(X(y)))\right\}dy \nonumber\\&+ 2\int_0^t\left|\sigma(X(y)) - \sigma(X(y))\right|^2\exp\left\{-\frac{2}{\eta}(\Phi(X(y)) - \Phi'(X(y)))\right\} dy \nonumber\\
	&+ \int_0^t\left(2\langle X(y) - X'(y),\textbf{l}\rangle - \frac{1}{\eta}|X(y) - X'(y)|^2\right)\nonumber\\&\times\exp\left\{-\frac{1}{\eta}(\Phi(X(y)) - \Phi'(X(y)))\right\}d\Phi(y)\nonumber\\&+\int_0^t\left(2\langle X(y) - X'(y),\textbf{l}\rangle - \frac{1}{\eta}|X(y) - X'(y)|^2\right)\nonumber\\&\times\exp\left\{-\frac{1}{\eta}(\Phi(X(y)) - \Phi'(X(y)))\right\}d\Phi'(y).
\end{align}
On the other hand, taking the expectation on both sides of \eqref{est-uni1} and using the Lipschitz condidion to the first term of the right hand side together with \eqref{cond-1},  we are led to

\begin{align}
	E\left(|X(t) - X'(t)|^2\exp\left\{-\frac{1}{\eta}(\Phi(X(t)) - \Phi'(X(t)))\right\}\right) \leq \nonumber\\ C\int_0^tE\Big(|X(y) - X'(y)|^2\exp\left\{-\frac{2}{\eta}(\Phi(X(y)) - \Phi'(X(y)))\right\}\Big)dy.
\end{align}
This also implies that

\begin{align}
	E[|X(t) - X'(t)|^2] \leq C\int_0^tE[|X(y) - X'(y)|^2]dy.
\end{align}
Hence, $X(t)=X'(t)$ for all $t \in [0,T]$. Then, the pathwise uniqueness of solutions to \eqref{main-skorohod}-\eqref{initial-cond} holds true.  Moreover, the pathwise uniqueness implies the uniqueness of strong solutions (see in \cite{Ikeda1981}, Theorem IV-1.1). On the other hand, combining the Lemma \ref{solu-theo} and the result provided in \cite{Yamada1971}, the system of reflected SDEs \eqref{main-skorohod}-\eqref{initial-cond} admits a unique strong solution $(X(t), \Phi(t)) \in C(\bar{S}; \bar{D}\times \bar{D})\times C(\bar{S})$.

\subsubsection{Proof of the dependence on the parameters}\label{stability-proof}

\begin{proof}
	Let us recall our system of SDEs from \eqref{main-skorohod2}, Section \ref{wellpossedness}:
	
	\begin{align}
		\begin{cases}
			dX^n(t) = f(X^n(t)) dt +g (X_t^n(t))dW(t) + \int_{\mathbb{R}}\rho(X^n(t),y) \nu (dy,dt) + d\Phi^n(t),\\
			X^n(0) = X_0^n \in \bar{D} \text{ for } n \geq 1.
		\end{cases}
	\end{align}
	For its solution, we have
	
	
	\begin{align}\label{sta-1}
		X_t^n(t) = X_0^n + \int_0^t f(X_z^n(z))dz + \int_0^t  g (X^n(z))dB(z) + \Phi^n(t).
	\end{align}
	Let us consider the following equation
	
	\begin{align}
		X^n(t) - X_t(t) &= X_0^n - X_0 + \int_0^t f(X^n(y))dy - \int_0^t f(X(y))dy \nonumber\\&+ \int_0^t g (X^n(y))dW(y) - \int_0^t g (X(y))dW(y) \nonumber\\&+\int_0^t \rho(X^n(y))dJ(y)+ \int_0^t \rho(X(y))dJ(y)+ \Phi^n(t) - \Phi(t).
	\end{align}
	Since
	$(a + b+ c + d+ e)^2 \leq 5a^2 + 5b^2+ 5c^2+ 5d^2$ for any $a, b, c, d, e  \in \mathbb{R}$, we have the following estimate
	\begin{align}\label{sta-2}
		|X^n(t) - X(t)|^2 &\leq 5|X_0^n - X_0|^2 + 5\left|\int_0^t f(X^n(y))dy - \int_0^t f(X(y))dy\right|^2 \nonumber\\&+ 5\left|\int_0^t g (X^n(y))dW(y) - \int_0^t g (X(y))dW(y)\right|^2 \nonumber\\&+ 5\left|\int_0^t \rho(X^n(y))dJ(y) - \int_0^t \rho(X(y))dJ(y) \right|^2+ 5|\Phi^n(t) - \Phi(t)|^2.
	\end{align} 
	Taking the expectation on both sides of  \eqref{sta-2}, we have
	\begin{align}\label{stabi-1}
		E(|X^n(t) - X(t)|^2) &\leq 5E(|X_0^n - X_0|^2) \nonumber\\&+ 5E\left(\left|\int_0^t f(X^n(y))dy - \int_0^t f(X(y))dy\right|^2\right) \nonumber\\&+ 5E\left(\left|\int_0^t g(X^n(y))dW(y) - \int_0^t \sigma (X(y))dW(y)\right|^2\right) \nonumber\\&+ 5E \left(\left|\int_0^t \rho(X^n(y))dJ(y) - \int_0^t \rho(X(y))dJ(y) \right|^2\right)\nonumber\\&+5E\left(|\Phi^n(t) - \Phi(t)|^2\right).
	\end{align} 
	To begin with, we consider the second and the third terms of the right-hand side of \eqref{stabi-1}. Using Cauchy-Schwarz inequality together with the assumption that $f, g$ are Lipschitz functions, we are led to
	
	\begin{align}\label{sta-1-1}
		E\left(\left|\int_0^t f(X^n(y))dy - \int_0^t f(X(y))dy\right|^2\right) &\leq CE\left(\int_0^t |f(X^n(y)) - f(X(y))|^2dy\right)\nonumber\\
		&\leq C\int_0^tE(|X^n(y) - X(y)|^2)dy
	\end{align}
	and 
{\small	\begin{align}\label{sta-1-2}
		E\left(\left|\int_0^t g(X^n(y))dW(y) - \int_0^t g (X(y))dW(y)\right|^2\right) &= E\left(\int_0^t |g(X^n(y)) - g (X(y))|^2dy\right)\nonumber\\
		&\leq C\int_0^t E(|X^n(y) - X(y)|^2)dy.
	\end{align}}
	Moreover, using \eqref{reflection-func}, it yields
	\begin{align}
		|\Phi^n(t) - \Phi(t)| &\leq 2|X_0^n - X_0| + 2\left|\int_0^t f(X^n(y))dy - \int_0^t f(X(y))dy\right| \nonumber\\&+ 2\left|\int_0^t g (X^n(y))dW(y) - \int_0^t g (X(y))dB(y)\right|\nonumber\\&+ 2\left|\int_0^t \rho(X^n(y))dJ(y) - \int_0^t \rho(X(y))dJ(y) \right|.
	\end{align}
	Since $(a+b+c + d)^2 \leq 4a^2+4b^2+4c^2 + 4d^2$ for all $a, b, c, d \in \mathbb{R}$, we have
	\begin{align}\label{sta-3}
		|\Phi^n(t) - \Phi(t)|^2 &\leq 8|X_0^n - X_0|^2 + 8\left|\int_0^t f(X^n(y)) - \int_0^t f(X(y))dy\right|^2 \nonumber\\&+ 8\left|\int_0^t g (X^n(y))dW(y) - \int_0^t g (X(y))dW(y)\right|^2 \nonumber\\&+ 8\left|\int_0^t \rho(X^n(y))dJ(y) - \int_0^t \rho(X(y))dJ(y) \right|^2.
	\end{align}
	Taking the expectation on both sides of \eqref{sta-3}, we are led to
	\begin{align}\label{stabi-es1}
		E(|\Phi_t^n(t) - \Phi_t(t)|^2) &\leq 8E(|X_0^n - X_0|^2) + 8E\left(\left|\int_0^t f(X^n(y)) - \int_0^t f(X(y))dy\right|^2\right) \nonumber\\&+ 8E\left(\left|\int_0^t g (X^n(y))dW(y) - \int_0^t g (X(y))dW(y)\right|^2\right)\nonumber\\&+ 8E\left(\left|\int_0^t \rho(X^n(y))dJ(y) - \int_0^t \rho(X(y))dJ(y) \right|^2\right).
	\end{align}
	By applying Cauchy-Schwarz's inequality to the second and third terms of the right-hand side of \eqref{stabi-es1}, we have the following estimate
	\begin{align}
		E(|\Phi^n(t) - \Phi(t)|^2) &\leq 8E(|X_0^n - X_0|^2) + 
		CE\left(\int_0^t |f(X^n(y))dy - f(X(y))|^2dy\right) 
		\nonumber\\&
		+ 8E\left(\int_0^t \left|g(X^n(y))dW(y) - g(X(y))\right|^2dy\right) 	\nonumber\\& 8E\left(\int_0^t \left|\rho(X^n(y)) - \rho(X(y))\right|^2dJ(y)\right).
	\end{align}
	Using again the assumption that $b, \sigma$ are Lipschitz functions, we get next the following estimate
	\begin{align}\label{sta-phi}
		E(|\Phi^n(t) - \Phi(t)|^2) \leq 8E(|X_0^n - X_0|^2) + C\int_0^t E(|X^n(y) - X(y)|^2)dy \nonumber\\+ C\int_0^t E(|X^n(y) - X(y)|^2)dy + C\int_0^t E(|X^n(y) - X(y)|^2dJ(y)).
	\end{align}
	Then, by using \eqref{sta-1-1}, \eqref{sta-1-2} and \eqref{sta-phi}, the inequality \eqref{sta-1} becomes
	\begin{align}\label{bfgronwall}
		E(|X^n(t) - X(t)|^2) &\leq CE(|X_0^n - X_0|^2) + C\int_0^t E(|X^n(y) - X(y)|^2)dy  \nonumber\\&+ C\int_0^t E(|X^n(y) - X(y)|^2dJ(y)),
	\end{align}
	for $0 \leq t \leq T$.
	Applying Gronwall's inequality to \eqref{bfgronwall} yields
	\begin{align}\label{gronwall}
		E(|X^n(t) - X(t)|^2) \leq CE(|X_0^n - X_0|^2).
	\end{align}
	Moreover, we have that
	\begin{align}\label{sta-4}
		\max_{0\leq t \leq T}|X^n(t) - X(t)|^2 &\leq 5|X_0^n - X_0|^2 + C\int_0^T |X^n(t) - X(t)|^2dt \nonumber\\&+ 5\max_{0 \leq t \leq T}\left|\int_0^T g (X^n(t)) - g (X(t))dB(t)\right|^2 \nonumber\\&+ C\max_{0 \leq t \leq T}\int_0^T \left|X^n(y) - X(y)\right|^2dJ(y) \nonumber\\&+  5\max_{0 \leq t \leq T} |\Phi_t^n(t) - \Phi_t(t)|^2.
	\end{align}
	After taking the expectation on both sides of \eqref{sta-4}, we apply the martingale inequality to the third term on the right-hand side of the resulting inequality, which reads
	\begin{align}
		E&\left(\max_{0\leq t \leq T}|X_t^n(t) - X_t(t)|^2\right) \leq 5E(|X_0^n - X_0|^2) + C\int_0^T E(|X_t^n(t) - X_t(t)|^2)dt \nonumber\\&+ C\int_0^T E(|X_t^n(t) - X_t(t)|^2)dt + 5E\left(\max_{0 \leq t \leq T} |\Phi_t^n(t) - \Phi_t(t)|^2\right) \nonumber\\&+ C\int_0^t E(|X^n(y) - X(y)|^2dJ(y)) \nonumber\\
		&\leq CE(|X_0^n - X_0|^2) + C\int_0^T E(|X_t^n(t) - X_t(t)|^2)dt \nonumber\\&+ C\int_0^t E(|X^n(y) - X(y)|^2dJ(y)). 
	\end{align}
	Finally, using \eqref{sta-phi} and \eqref{gronwall}, we obtain the desired estimate: 
	\begin{align}
		E\left(\max_{0\leq t \leq T}|X^n(t) - X(t)|^2\right) \leq CE(|X_0^n - X_0|^2).
	\end{align}
	By using the fact that 
	$\lim_{n\to \infty} E(|X_0^n - X_0|^2) = 0$,
	we obtain the following estimate
	\begin{align}
		\lim_{n \to \infty} E\left(\max_{0\leq t \leq T}|X^n(t) - X(t)|^2\right) = 0.
	\end{align}
\end{proof}

\section{Applications of coupled stochastic processes in bounded domains}\label{applications}

In general, in the study of biological systems, the descriptions of individual cells may be appropriate for primitive systems. However, to model reliably living systems with complex biological interactions, a large number of cells needs to be accounted for. For instance, the human brain consists of approximately 1011 neurons and is connected to 104 other neurons \cite{Dowling2001}. To better understand the resulting neural activity requires appropriate models that can track the average firing rate across many areas of a neuronal network. Therefore, from a large population of densely coupled neurons, Wilson and Cowan \cite{Wilson1972,Kilpatrick2014} have derived an effective system for the proportion of cells in a population that are active per unit time. In this section, we consider an application of our active-passive population dynamics in the model of synaptically coupled excitatory and inhibitory neurons in the
neocortex. In particular, we study a system of stochastic Wilson-Cowan-type equations with reflection and possible jump-like exit from a boundary
that allows us to model the dynamics of two interacting populations of excitatory and inhibitory neurons (see, e.g., Fig. \ref{fig:0-1}). Let us recall the following Wilson-Cowan system, considering a 2-dimensional dynamic case (see, e.g. \cite{Wilson1972})
\begin{align}\label{main_WC}
	\begin{cases}
		\tau_E\frac{dr_E}{dt} = -r_E(t) + (1-\delta_Er_E(t))F_E(w_{EE}r_E(t) - w_{EI}r_I(t) + I_E^{\text{ext}},\theta_E,a_E),\\
		\tau_I\frac{dr_I}{dt} = -r_I(t) + (1-\delta_Ir_I(t))F_I(w_{IE}r_E(t) - w_{II}r_I(t) + I_I^{\text{ext}},\theta_I,a_I),
	\end{cases}
\end{align}
where $r_E(t)$ and $r_I(t)$ are the proportions of excitatory and inhibitory cells firing per unit time at the instant $t$, respectively. Here, $w_{EE}$ and $w_{II}$ represent the strengths of connection between excitatory and inhibitory cells, respectively, while $w_{EI}$ describes the strength of connection from excitatory cells to inhibitory cells and $w_{IE}$ denotes the strength of connection from inhibitory cells to excitatory cells. Moreover, $\tau_E$ and $\tau_I$ denote the refractory periods of excitatory and inhibitory cells after a trigger, respectively, while $r_E$ and $r_I$ are the absolute refractory periods, $\theta_E$ and $\theta_I$ are the threshold of the excitatory and inhibitory populations. We also assume that $r_E=0$ and $r_I = 0$ correspond to a low-activity resting states of excitatory and inhibitory cells. In \eqref{main_WC}, functions $F_E$ and $F_I$ represent the nonlinearities typically chosen to be sigmoidal defined as (see, e.g. \cite{Wilson1972})
\begin{align}
	F(x,\theta,a) := \frac{1}{1+\exp[-a(x-\theta)]} - \frac{1}{1+\exp(a\theta)}. 
\end{align}

\begin{figure}[h!]
	\centering
	\includegraphics[width=0.8\textwidth]{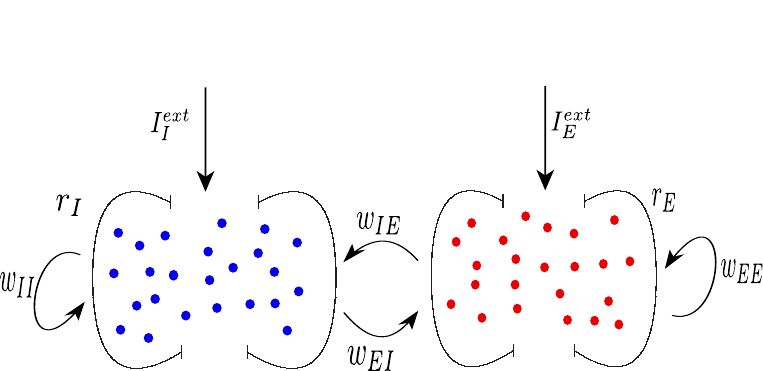} 
	\caption{[Color online] Sketch of networks of interacting excitatory and inhibitory populations.}
	\label{fig:0-1}
\end{figure}

In general, an excitatory transmitter generates an electrical signal called an action potential in the receiving neuron, while an inhibitory transmitter prevents such electrical signals (see, e.g., \cite{Chan2019}). Hence, we assume that an excitatory population can be considered an active population, while an inhibitory population can be seen as a passive population. Neuron dynamics are intensively computed and often deal with many challenges from severe accuracy degradation if the input data is corrupted with noise. Furthermore,  the noise is normally assumed as purely diffusive noise, namely, as random fluctuations with continuous sample paths. However, such a description is incomplete due to the fact that the diffusive fluctuations are large and abrupt events appear at random times throughout the time series \cite{Melanson2019,Bressloff2011,Powanwe2021}. To get closer to the real scenarios in biological systems,  jump-diffusion stochastic processes provide a more appropriate framework to model these data.  
Using the descriptions provided in the previous sections, we focus on investigating the system of stochastic Wilson-Cowan-type equations with reflection and possible jump-like exit from a boundary, which reads:
\begin{align}\label{WC_2}
	\begin{cases}
		dr_E(t) &= \frac{1}{\tau_E}(- r_E(t) + (1-\delta_Er_E(t))\tilde{F}_E(w_{EE}r_E(t) - w_{EI}r_I(t) + I_E^{\text{ext}},\theta_E,a_E))dt\\&+ \frac{\sigma_E^{\text{ext}}}{\tau_E}(1-\delta_Er_E(t))dW_E(t)  + d\phi_E(t)+ \int_{\mathbb{R}}\rho_E(r_E(t))dJ_E(t),\\
		dr_I(t) &= \frac{1}{\tau_I}(- r_I(t) + (1-\delta_Ir_I(t))\tilde{F}_I(w_{IE}r_E(t) - w_{II}r_I + I_I^{\text{ext}},\theta_I,a_I))dt\\&+ \frac{\sigma_I^{\text{ext}}}{\tau_I}(1-\delta_Ir_I(t))dW_I(t) +d\phi_I(t) + \int_{\mathbb{R}}\rho_I(r_I(t))dJ_I(t),
	\end{cases}
\end{align}
In \eqref{WC_2}, we assume that $F_E(w_{EE}r_E - w_{EI}r_I + I_E^{\text{ext}},\theta_E,a_E) = \tilde{F}_E(w_{EE}r_E - w_{EI}r_I + I_E^{\text{ext}},\theta_E,a_E)) + \sigma_E^{\text{ext}}dW_E(t)$ and $F_I = (w_{IE}r_E - w_{II}r_I + I_I^{\text{ext}},\theta_I,a_I)) + \sigma_I^{\text{ext}}dW_I(t)$. Note that $f_{A}(X_{A}(t), X_{P}(t))$ and $f_{P}(X_{A}(t), X_{P}(t))$ (in \eqref{Active_population}) become $\frac{1}{\tau_E}(- r_E + (1-\delta_Er_E)\tilde{F}_E(w_{EE}r_E - w_{EI}r_I + I_E^{\text{ext}},\theta_E,a_E))$ and $\frac{1}{\tau_I}(- r_I + (1-\delta_Ir_I)\tilde{F}_I(w_{IE}r_E - w_{II}r_I + I_I^{\text{ext}},\theta_I,a_I))$, respectively. Similarly, $g_{A}(X_{A}(t), X_{P}(t))$ and  $g_{P}(X_{A}(t), X_{P}(t))$ \\(in \eqref{Active_population}) become $ \frac{\sigma_E^{\text{ext}}}{\tau_E}(1-\delta_Er_E)$ and $\frac{\sigma_I^{\text{ext}}}{\tau_I}(1-\delta_Ir_I)$, while $\rho_A, \rho_P, \phi_{A}, \phi_{P}$ can be considered as $\rho_E, \rho_I, \phi_{E}, \phi_I$, respectively.

The simulations presented in this section have been carried out by using by a discrete-time integration based on the Euler method inplemented in Python. In this section, for $t\geq 0$, we consider the case of $r_E(t),r_I(t) \geq 0$ with $L(0) = 0$ and
\begin{align}
	\phi_{E}(t) = \int_{0}^{t} \mathbbm{1}_{r_E(s) = 0}d\phi_{E}, \text{ and }
	\phi_{I}(t) = \int_{0}^{t} \mathbbm{1}_{r_I(s) = 0}d\phi_{I}.
\end{align}
This condition implies that the process can increase only when $r_E$ and $r_I$ hit 0 (see, e.g., \cite{Pilipenko2012}). In other words, this is the reflecting boundary condition at 0 in one-dimensional for each population in the domain $D = [0, \infty)$. Moreover, we use a set of compound Poission process as in \eqref{compoundPossion1} to describe the jump-like exit at the boundary. 

In the simulations, we choose the parameter set as follows: $r_E = r_I = 0.2$, $\tau_E = 1$ (ms), $\tau_I = 2$ (ms), $\theta_E = 2.8$, $\theta_I = 4$, $a_E=1.2$, $a_I = 1$, $w_{EE} = 12$, $w_{EI} = 4$, $w_{IE} = 13$, $w_{II} = 11$, $dt=0.1$ (ms). These parameters have also been used in \cite{Wilson1972}.

Let $\{V(t) : t \geq 0\}$ be an Ornstein-Uhlenbeck process (see, e.g., Fig. \ref{fig:1}) defined on $[r, \infty)$ with drift $(\mu - V(t)/\gamma)$ and constant diffusion parameter $\sigma$. Then, the process $\{V(t) : t \geq 0\}$ satisfies the following SDE:

\begin{align}\label{OU}
	\begin{cases}
		dV(t) = (\mu - \frac{1}{\gamma}V(t))dt + \sigma dW(t), \\
		V(0) \in [r, \infty),
	\end{cases}
\end{align}
where $W(t)$ denotes Gaussian white noise. 

The main representative numerical results of our analysis are shown in Fig. \ref{fig:3}, where we have plotted the population trajectories of excitatory and inhibitory populations.

\begin{figure}[h!]
	\centering
	\includegraphics[width=0.8\textwidth]{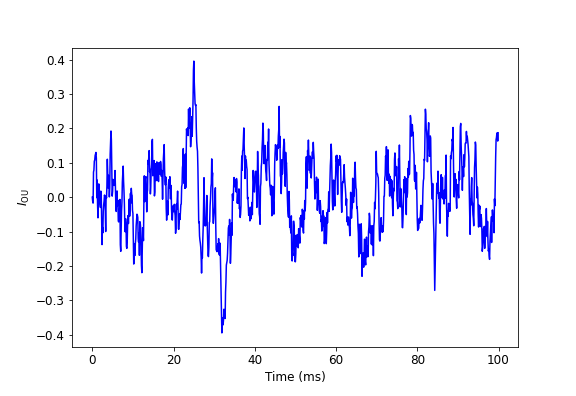} 
	\caption{[Color online]  Ornstein-Uhlenbeck input current profile satifies the Ornstein-Uhlenbeck process \eqref{OU}. Parameters: $\mu=0$, $\gamma=1$ and $\sigma=0.1$. }
	\label{fig:1}
\end{figure}

\begin{figure}[h!]
	\centering
	\begin{tabular}{ll}
		\includegraphics[width=0.45\textwidth]{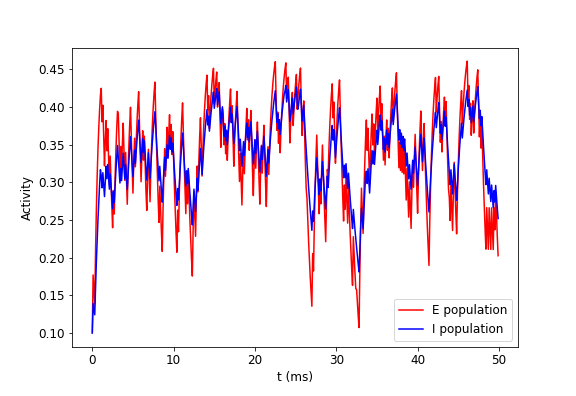} &\includegraphics[width=0.45\textwidth]{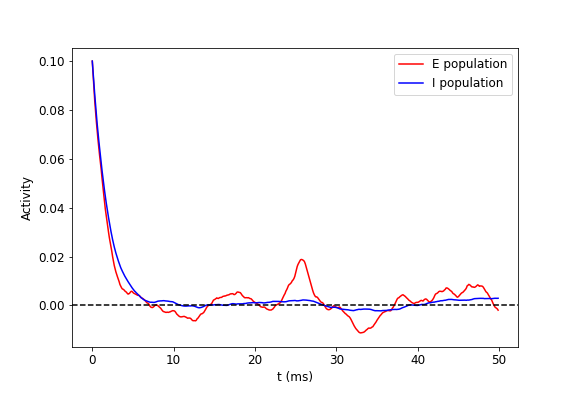} \\
		\includegraphics[width=0.45\textwidth]{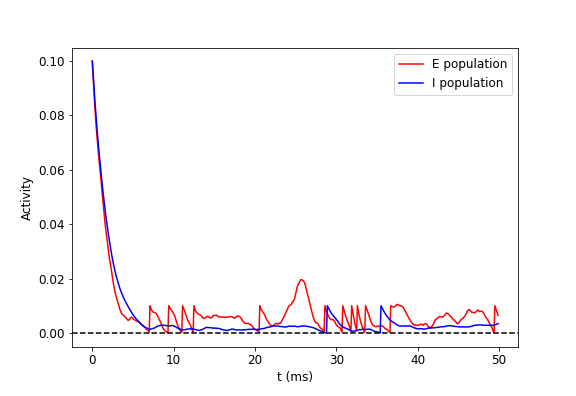} &\includegraphics[width=0.45\textwidth]{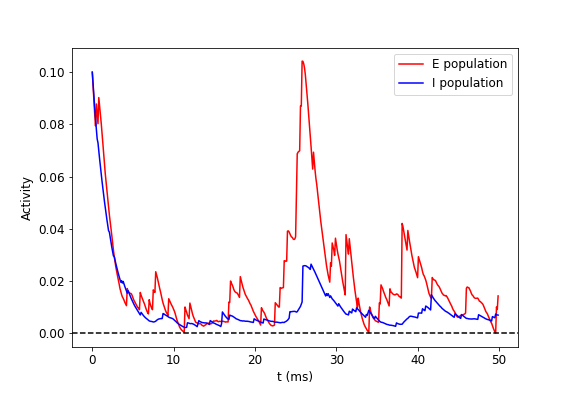}
	\end{tabular}
	\caption{[Color online] The population trajectories of excitatory (red color) and inhibitory populations (blue color) of the system \eqref{WC_2}.
		Top left: Gaussian white noise input current. Top right: Ornstein-Uhlenbeck input current (see, e.g., Fig. \ref{fig:1}). Bottom left: Ornstein-Uhlenbeck input current with reflecting boundary condition at 0. Bottom right: Ornstein-Uhlenbeck input current with reflecting boundary condition at 0 and with jumps. Parameters: $\mu=0$, $\gamma=1$ and $\sigma=0.1$.
	}\label{fig:3}	
\end{figure}

In particular,  in the top left panel of Fig. \ref{fig:3}, we have plotted the population trajectories of excitatory and inhibitory populations under only Gaussian white noise input current. We see that there are fluctuations in the time evolution of the proportions of both excitatory and inhibitory cells firing. However, in the top right panel of Fig. \ref{fig:3}, when we replace the Gaussian white noise current with the Ornstein-Uhlenbeck input current presented in Fig. \ref{fig:1}, we observe that the firing activity of excitatory and inhibitory cells fluctuates to values less than zero. Therefore, we add the reflecting factor to our system with the Ornstein-Uhlenbeck input current and we see that the firing activity increases to values larger than 0 (the resting states of excitatory and inhibitory cells) in the bottom right panel of Fig. \ref{fig:3}. Moreover, in the presence of Poisson jumps and the reflecting factor in our system with Ornstein-Uhlenbeck input current, the firing activities of excitatory and inhibitory cells increase dramatically, as seen in the top right panel of Fig. \ref{fig:3}, compared to the case presented in the top left panel of the same figure. Specifically, the firing rate of excitatory cells increases to 0.1 at $t=27$ (ms) and the firing rate of the inhibitory cells increases to 0.02 at the same time. However, the firing rate of inhibitory cells is less than the firing rate of excitatory cells in the presence of jumps in the system. 

Additionally, we have observed that in the presence of Ornstein-Uhlenbeck input current, reflecting factors together with the Poisson jumps increase the firing activity of excitatory and inhibitory populations. This effect may lead to an improvement in the response of neurons to each stimulus in neuronal systems. 

\section{Conclusions}

We have proposed a model based on a coupled system of reflecting Skorokhod-type stochastic differential equations with jumps. We have analyzed the well-posedness of such systems.
In particular, using compactness methods and Skorokhod's representation of solutions to SDEs with the jump-like exit from a boundary, we have shown the existence and uniqueness of the solutions of these systems. Additionally, the structure of the Skorokhod problem allowed us to prove also the solution dependence on the parameters of our system. On the other hand, the mathematical setting of our system of SDEs has demonstrated a new point of view useful for the field of modelling and simulations of population dynamics. We provided details of the models along with representative numerical
examples and discussed the applications of our population dynamics in applications to neuronal dynamics. In particular, we have considered a system of stochastic Wilson-Cowan-type equations with reflection and possible jump-like exit from a boundary. Our numerical results have shown that the presence of Ornstein-Uhlenbeck input current, reflecting factors together with the Poisson jumps, strongly affects the firing activities of excitatory and inhibitory populations in a neuronal system. 

\appendix
\section{Technical preliminaries for Section \ref{wellpossedness}}\label{appendix}

In the proof of existence in Section \ref{wellpossedness}, we used compactness arguments. Here, we provide necessary details for such arguments to hold. We recall the classical Ascoli-Arzel\`{a} Theorem \cite{Rudin1953}:\\
A family of functions $U \subset C(\bar{S};\mathbb{R}^d)$ is relatively compact (with respect to the uniform topology) if
\begin{enumerate}
	\item[i.] for every $t \in \bar{S}$, the set $\{h(t);h \in U\}$ is bounded.
	\item[ii.] for every $\varepsilon>0$ and $t,s \in \bar{S}$, there is $\bar{\delta} > 0$ such that
	\begin{eqnarray}
		|h(t)-h(s)| \leq \varepsilon,
	\end{eqnarray}
	whenever $|t - s| \leq \bar{\delta}$ for all $h \subset U$.
\end{enumerate}
For a function $h: \bar{S} \to \mathbb{R}^d$,  we introduce the definition of H\"{o}lder seminorms as
\begin{eqnarray}\label{holder_norm}
	[h]_{C^\alpha(\bar{S}; \mathbb{R}^d)} = \sup_{t\neq s; t,s \in \bar{S}}\frac{|h(t)-h(s)|}{|t-s|^{\alpha}}, 
\end{eqnarray}
for $\alpha \in (0,1)$ and the supremum norm as
\begin{eqnarray}\label{infty_norm}
	\|h\|_{L^\infty(S;\mathbb{R}^d)} = \mathrm{ess}\sup_{t \in \bar{S}}|h(t)|.
\end{eqnarray}
We refer to \cite{Adams03} and \cite{Gilbarg1977} for more details on the used function spaces.

In fact, the simple sufficient conditions for $\text{i}.$ and $\text{ii}.$ are 
\begin{enumerate}
	\item[i'.] there is $M_1>0$ such that $\|h\|_{L^\infty(S;\mathbb{R}^d)}\leq M_1$ for all $h \in U$,
	\item[ii'.] for some $\alpha \in (0,1)$, there is an $M_2>0$ such that $[h]_{C^\alpha(\bar{S}; \mathbb{R}^d)} \leq M_2$ for all $h \in U$.
\end{enumerate}
Hence, we have the sets 
\begin{eqnarray}\label{relativelycompactKMP}
	K_{M_1M_2} := U = \left\{h\in C(\bar{S};\mathbb{R}^d); \|h\|_{L^\infty(S;\mathbb{R}^d)}\leq M_1, [h]_{C^\alpha(\bar{S}; \mathbb{R}^d)} \leq M_2\right\}
\end{eqnarray}
are relatively compact in $C(\bar{S}; \mathbb{R}^d)$.

For $\alpha \in (0,1)$, $T>0$ and $p>1$, the space 
$W^{\alpha,p}(S;\mathbb{R}^d)$ is defined as the set of all $h \in L^p(S;\mathbb{R}^d)$ such that
\begin{eqnarray}
	[h]_{W^{\alpha,p}(S;\mathbb{R}^d)}:= \int_{0}^{T}\int_{0}^{T}\frac{|h(t) - h(s)|^p}{|t-s|^{1 + \alpha p}}dtds < \infty. \nonumber
\end{eqnarray}
This space is endowed with the norm
\begin{eqnarray}
	\|h\|_{W^{\alpha,p}(S;\mathbb{R}^d)} = \|h\|_{L^p(S;\mathbb{R}^d)} + [h]_{W^{\alpha,p}(S;\mathbb{R}^d)}.\nonumber
\end{eqnarray}

Moreover, we have the following embedding
\begin{eqnarray}
	W^{\alpha,p}(S; \mathbb{R}^d) \subset C^\gamma(\bar{S};\mathbb{R}^d) \quad \textrm{ for } \alpha p - \gamma >1\nonumber
\end{eqnarray}
and $[h]_{C^\gamma(\bar{S}; \mathbb{R}^d)} \leq C_{\gamma,\alpha,p}\|h\|_{W^{\alpha,p}(S; \mathbb{R}^d)}$ (see e.g. in Theorem $6$, Chapter $5$ in \cite{Evans1998}). Relying on the Ascoli-Arzel\`{a} Theorem, we have the following situation:
\begin{enumerate}
	\item[ii''.] for some $\alpha \in (0,1)$ and $p>1$ with $\alpha p>1$, there is $M_2>0$ such that $[h]_{W^{\alpha,p}(S;\mathbb{R}^d)} \leq M_2$ for all $h \in U' \subset C(\bar{S};\mathbb{R}^d)$. 
\end{enumerate}
If $\textrm{i'}$ and $\textrm{ii''}$ hold, then the set
\begin{eqnarray}\label{relativelycompactKMP_2}
	K'_{M_1M_2} :=U'= \left\{h \in C(\bar{S}; \mathbb{R}^d); \|h\|_{L^\infty(S;\mathbb{R}^d)} \leq M_1, [h]_{W^{\alpha,p}(S;\mathbb{R}^d)} \leq M_2\right\}
\end{eqnarray}
is relatively compact in $C(\bar{S}; \mathbb{R}^d)$, if $\alpha p>1$ (see e.g. \cite{Flandoli95}, \cite{Colangeli2019}). In the main part of the manuscript, this result was formulated in formula \eqref{km1m2}.

\section*{Acknowledgments}
TKTT and RM are grateful to the NSERC and the CRC Program for their
support. RM is also acknowledging support of the BERC 2022-2025 program and Spanish Ministry of Science, Innovation and Universities through the Agencia Estatal de Investigacion (AEI) BCAM Severo Ochoa excellence accreditation SEV-2017-0718 and the Basque Government fund AI in BCAM EXP. 2019/00432.
TKTT and AM thank O. M. Richardson (Karlstad), E.N.M. Cirillo (Rome) and M. Colangeli (L'Aquila) for very fruitful discussions on the topic of active-passive population dynamics through heterogeneous environments.

\bibliographystyle{siamplain}
\bibliography{mybibfile}
\end{document}